\newtheorem{prop}{Proposition}
\newtheorem{lem}{Lemma}
\newtheorem*{defi}{Definition}
\newtheorem{thm}{Theorem}
\theoremstyle{remark}
\begin{document}

\title{More about areas and centers of Poncelet polygons}
\author{Ana C. Chavez-Caliz}

\maketitle

\section{Introduction}

Inspired by the work of R. Schwartz and S. Tabachnikov exposed in \cite{centers-poncelet} about the loci of different centers of mass of Poncelet polygons, we study the locus of another center of mass: the Circumcenter of Mass (studied in detail in \cite{CCM}). \\

Let $P$ be an oriented $n$-gon $P=(V_1, \ldots , V_n)$ where each $V_i \in \mathbb{R}^2$. Let $A(P)$ denote the algebraic area of $P$ (which is defined by formula (\ref{area})). Given an appropriate\footnote{Turn to [14] for more specifics about the conditions of the triangulation.} triangulation  of $P$,  $P= \bigcup T_i$, let  $C_i$ and $A_i$ be the circumcenter and area of $T_i$ respectively.\\

Assume $A(P)$ is non-zero. The \emph{Circumcenter of Mass of $P$}, $CCM(P)$ is defined as the weighted sum of the circumcenters:
\begin{equation}
CCM(P)= \sum_{i} \frac{A_i}{A(P)}C_i.
\label{CCM}
\end{equation}

The Circumcenter of Mass does not depend on the triangulation and it can be described in terms of the vertices $V_i=(x_i, y_i)$ by the formula:

\begin{equation}
\begin{split}
CCM(P)= \frac{1}{4A(P)} & \Biggl( \sum_{i=1}^{n} y_i (x_{i-1}^2 + y_{i-1}^2 - x_{i+1}^2 - y_{i+1}^2), \\ 
						& \sum_{i=1}^{n} -x_i (x_{i-1}^2 + y_{i-1}^2 - x_{i+1}^2 - y_{i+1}^2) \Biggr).
\end{split}
\end{equation}

For more details and properties of the Circumcenter of Mass, consult \cite{CCM}. \\

Another center discussed in this paper is $CM_2$. If $P$ is a polygon, $CM_2(P)$ is the center of mass of $P$, considering $P$ as a ``homogeneous lamina''. Similarly to $CCM$, if $A(P)\neq 0$, $P=\bigcup T_i$ is a triangulation, and $G_i$, $A_i$ denote the centroid and area of each $T_i$ respectively, then $CM_2(P)$ is defined as the weighted sum of the centroids:

\begin{equation}
CM_2(P)= \sum_{i} \frac{A_i}{A(P)}G_i.
\label{CM_2}
\end{equation}

If $P$ has vertices $V_1, \ldots, V_n$, with $V_i=(x_i, y_i)$, the center of mass $CM_2(P)$ can be expressed in terms of the coordinates of the vertices:

\begin{equation}
\begin{split}
CM_2(P)= \frac{1}{6A(P)} & \sum_{i=1}^{n} (x_iy_{i+1}-x_{i+1}y_i) (x_i+x_{i+1}, y_i+y_{i+1}).
\end{split}
\end{equation}

showing that $CM_2$ also does not depend on the triangulation.\\

Paper \cite{centers-poncelet} studies the locus of $CM_2$, however, there is a gap within the proof of the result. Proposition \ref{limit} of this paper recognizes this.\\

Before continuing, let us recall Poncelet closure theorem:

\begin{thm}[\textbf{Poncelet Porism}]
Let $\gamma, \Gamma$ be two nested conics in $R^2$, with $\gamma$ in the interior of $\Gamma$. Let $W_1 \in \Gamma$, and $\ell_1$ be one of the tangents to $\gamma$ passing through $W_1$: this line intersects $\Gamma$ in a new point that we call $W_2$. Repeat this construction, now starting from $W_2$, and considering the other tangent $\ell_2$ to $\gamma$ passing through $W_2$ to find $W_3$; and so on.\\
Suppose that for this particular choice of $W_1$, there is an $n\in \mathbb{N}$ such that $W_{n+1}=W_1$. \\
Then, for any other $V_1 \in \Gamma$ if $V_2, V_3, \ldots \in \Gamma$ are obtained by the same construction described above, it happens that $V_{n+1}=V_1$. See Figure \ref{Poncelet}.\\ 

The polygons with vertices $(V_1, \ldots, V_n)$ inscribed in $\Gamma$ and circumscribed about $\gamma$ are called \emph{Poncelet polygons}.
\end{thm}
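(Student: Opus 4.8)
The plan is to linearize the Poncelet dynamics by transporting it to an elliptic curve, where one step becomes a translation; closure then becomes a torsion condition that is visibly independent of the starting vertex. First I would complexify and projectivize, regarding $\gamma$ and $\Gamma$ as smooth conics in $\mathbb{CP}^2$ in general position, so that by Bézout they meet in four distinct points. The central object is the incidence curve
\[
E = \{(p, \ell) : p \in \Gamma,\ \ell \text{ tangent to } \gamma,\ p \in \ell\},
\]
together with the projection $\pi : E \to \Gamma \cong \mathbb{CP}^1$, $(p,\ell)\mapsto p$. Through a generic $p\in\Gamma$ there pass exactly two tangents to $\gamma$, so $\pi$ is two-to-one, branched precisely where the two tangents coincide, i.e. where $p\in\gamma$: the four points of $\gamma\cap\Gamma$. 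Riemann–Hurwitz then gives $2g(E)-2 = 2(-2)+4 = 0$, so $g(E)=1$ and $E$ is an elliptic curve; I fix an origin to equip it with a group law.

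Next I would introduce the two natural involutions of $E$. The map $\iota_1$ keeps the line $\ell$ fixed and swaps the two points of $\ell\cap\Gamma$; the map $\iota_2$ keeps the point $p$ fixed and swaps the two tangents to $\gamma$ through $p$. The key observation is that each has fixed points — $\iota_1$ where $\ell$ is tangent to $\Gamma$, and $\iota_2$ where $p\in\gamma$ — and an involution of an elliptic curve possessing fixed points must be a reflection: in the group law, $\iota_j(x) = a_j - x$ for suitable constants $a_1, a_2$. One step of the Poncelet construction (move along the current tangent to the next vertex, then switch to the other tangent there) is exactly the composition $T = \iota_2 \circ \iota_1$, so
\[
T(x) = a_2 - (a_1 - x) = x + (a_2 - a_1),
\]
that is, $T$ is translation by the fixed element $c = a_2 - a_1$, independent of where we start.

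Finally, the closure hypothesis $W_{n+1}=W_1$ says that $T^n$ brings the starting configuration back to itself, hence $nc = 0$ in the group. But a translation is either trivial or fixed-point-free, so $nc=0$ forces $T^n$ to be the identity on all of $E$; consequently $V_{n+1}=V_1$ for every initial configuration, which is the porism.

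I expect the main obstacle to be twofold. The lesser part is the genus computation and the verification that the two involutions genuinely have fixed points (so that the reflection normal form applies), together with isolating the generic hypotheses that make $E$ smooth. The more delicate part is the bookkeeping between the real Poncelet orbit and the complex translation: the dynamics really lives on the state space of \emph{pairs} $(p,\ell)$, and one must check that $W_{n+1}=W_1$ as an honest closed $n$-gon (rather than a degenerate or ``folded'' return, which corresponds to the exceptional symmetric fibers of $\pi$) indeed forces $nc=0$ and not merely the return of the first coordinate. Handling the degenerate configurations — tangency of $\gamma$ and $\Gamma$, or coincident intersection points — as limiting or excluded cases is where the care is needed.
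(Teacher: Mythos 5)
Your proposal is correct and coincides with the approach this paper adopts: the paper itself does not prove the Porism (it is stated as classical background, with references to \cite{Berger} and \cite{Flatto}), but the machinery you describe --- the flag curve ${\cal E}$, the two-to-one projection onto $\Gamma$ branched at the four points of $\gamma \cap \Gamma$, the involutions $\sigma, \tau$, and the Poncelet map as a translation of the resulting elliptic curve --- is exactly the Griffiths--Harris framework of \cite{G-H} that the paper sets up in Section 2 and relies on for its own theorems. The two caveats you flag are indeed the genuine ones: nested real conics need not be in general position over $\mathbb{C}$ (concentric circles are tangent at the circular points at infinity, so the four branch points can collide and ${\cal E}$ degenerate), and one must rule out ``folded'' returns where only the vertex, not the flag $(p,\ell)$, comes back; for real nested conics the latter is excluded because a folded orbit forces a real fixed point of $\sigma$ or $\tau$, i.e.\ a real common tangent or a real intersection point, neither of which exists.
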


Significant references for this classic result are \cite{Berger} and \cite{Flatto}.\\

\begin{figure}
\centering
\includegraphics[scale=0.27]{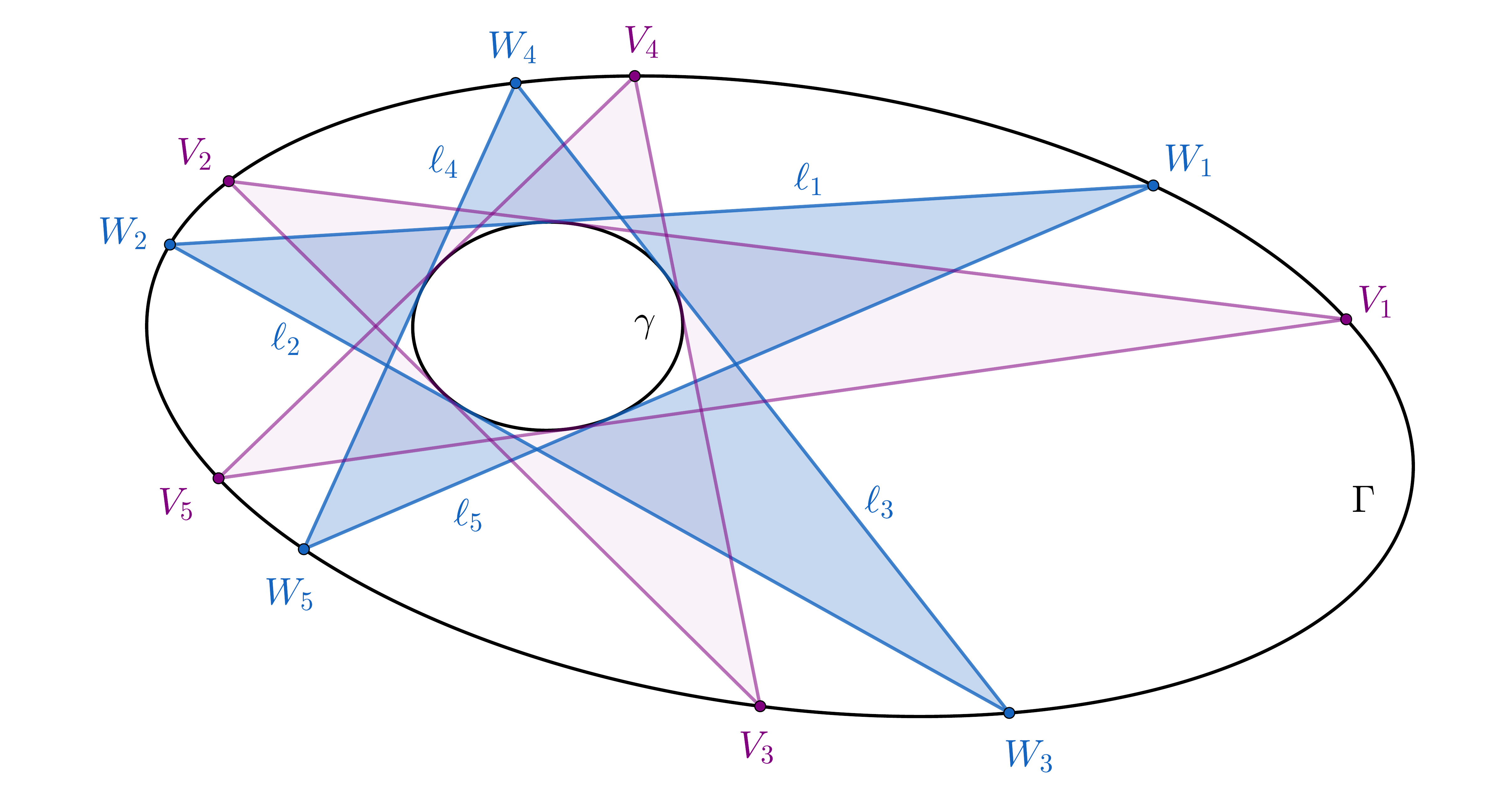}
\caption{Poncelet Theorem, with $n=5$.}
\label{Poncelet}
\end{figure}

Using the background set by Griffiths and Harris in \cite{G-H}, and Liouville's theorem, we show the following result:

\begin{thm}
Let $\gamma, \Gamma$ be a pair of conics that admit a $1$-parameter family of Poncelet $n$-gons $P_t$. The locus of the Circumcenter of Mass $CCM(P_t)$ is also a conic.
\end{thm}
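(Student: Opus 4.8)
The plan is to realize the whole Poncelet family as a translation structure on an elliptic curve and then to exploit two symmetries of the Circumcenter of Mass to push the computation down to the projective line, where ``conic'' becomes the statement that a rational parametrization has degree two.

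First I would invoke the Griffiths--Harris description of the Poncelet correspondence \cite{G-H}: the pairs (point of $\Gamma$, tangent line to $\gamma$) form an elliptic curve $E=\mathbb{C}/\Lambda$, the projection $\pi\colon E\to\Gamma$ is a double cover branched at the four points of $\Gamma\cap\gamma$, and the Poncelet map is translation by a fixed $\tau\in E$. The closure hypothesis forces $\tau$ to be an $n$-torsion point. Writing $\psi=\pi$ for the induced parametrization of $\Gamma$, the vertices of $P_t$ are $V_i(t)=\psi\bigl(t+(i-1)\tau\bigr)$, so the coordinate functions $x_i(t),y_i(t)$ are elliptic functions of $t$ whose only poles sit over the two points of $\Gamma\cap\ell_\infty$ where $\Gamma$ meets the line at infinity. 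Substituting into the coordinate formula for the Circumcenter of Mass exhibits $F(t):=CCM_x(P_t)$ and $G(t):=CCM_y(P_t)$ as ratios of elliptic functions, hence as meromorphic functions on $E$; here the denominator $4A(P_t)$ is itself a nonconstant elliptic function.

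The key structural step is to record two invariances of $CCM$. Since $CCM$ does not depend on the starting vertex, cyclic relabeling gives $F(t+\tau)=F(t)$ and $G(t+\tau)=G(t)$, so $F,G$ descend to the quotient elliptic curve $E'=E/\langle\tau\rangle$. Since $CCM$ is also unchanged by reversing the orientation of $P$ (each $A_i$ and $A(P)$ change sign together while the circumcenters $C_i$ are unchanged), and orientation reversal is realized on parameters by an involution $\iota\colon t\mapsto c-t$ --- this uses that the deck transformation of $\pi$ has the form $z\mapsto k-z$ --- the functions $F,G$ are $\iota$-invariant as well. The subgroup of $\mathrm{Aut}(E)$ generated by $\tau$-translation and $\iota$ is dihedral of order $2n$, and the quotient is $X:=E'/\iota\cong\mathbb{CP}^1$. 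Thus $F$ and $G$ are rational functions of a single parameter $u$ on $X$, and the locus of $CCM(P_t)$ is the rational curve $u\mapsto\bigl(F(u),G(u)\bigr)$.

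It remains to show this rational curve has degree two, i.e.\ that $F,G$ have total pole divisor of degree two on $X$; this is where I expect the real work. The natural candidates for poles are the (at most two) points of $X$ lying over $\Gamma\cap\ell_\infty$, where a vertex escapes to infinity: there the double poles of $x_i^2+y_i^2$ in the numerator meet the simple poles of the area in the denominator, and a local analysis --- together with Liouville's theorem, which forbids an elliptic function with a single simple pole and forces residues to cancel --- should show that $F,G$ acquire only simple poles there. The delicate point, and the main obstacle, is to prove that the zeros of $A(P_t)$ do not create further poles, i.e.\ that the numerator sums vanish to sufficient order wherever the area degenerates; this is precisely the phenomenon that is mishandled for $CM_2$ in \cite{centers-poncelet} and flagged in Proposition \ref{limit}, so it must be treated with care here. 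Once the pole count gives degree two, the image is an irreducible conic --- a genuine conic rather than a line, which can be confirmed by checking that $1,F,G$ are linearly independent or by evaluating at a symmetric configuration --- completing the proof.
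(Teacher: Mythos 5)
Your skeleton is the paper's own: the Griffiths--Harris elliptic curve $\mathcal{E}$ of flags, the Poncelet map as a translation, $D_n$-invariance of the coordinates of $CCM$, a pole analysis at the flags whose polygon has a vertex at infinity, and a Liouville-type conclusion. Your ending differs, but only cosmetically: you descend to $X=\mathcal{E}/D_n\cong\mathbb{CP}^1$ and argue that the parametrization has degree two, while the paper stays on $\mathcal{E}$, expands $x$ and $y$ in local parameters at one pole from each of the two orbits (over $L$ and over $M$), and solves four homogeneous linear equations in the five unknowns $A,B,C,D,E$ to produce a quadratic $f(x,y)=Ax^2+By^2+Cxy+Dx+Ey$ with no poles, which Liouville then forces to be constant. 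These are the same piece of linear algebra dressed differently (your ``pole divisor of degree two'' is exactly the paper's ``more unknowns than equations''), so the variant ending is valid but not a genuinely different route.

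The genuine gap is that the two steps you explicitly defer are precisely where all the work lies, and neither is routine. First, you give no argument that the zeros of $A(P_t)$ create no poles of $F,G$; you only flag it as ``the main obstacle.'' The paper spends two sections on exactly this: Theorem \ref{jefe} shows (by matching the $4n$ zeros supplied by degenerate polygons, Lemma \ref{zero-degenerate}, against the order $4n$ of the elliptic area function) that when the area is not identically zero, every zero-area polygon with finite vertices is degenerate; Proposition \ref{limit} then builds a triangulation adapted to each degenerate polygon and checks that each triangle's area and $A(P_t)$ vanish to the same (first) order in $t$ while the circumcenters stay finite, so $CCM(P_t)$ has a finite limit there. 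Without this input, your claim that all poles of $F,G$ sit over $L$ and $M$ --- and hence your entire degree count on $X$ --- is unsupported. Second, your justification that the poles over $L$ and $M$ are simple is not an argument: the remark that Liouville ``forbids a single simple pole and forces residues to cancel'' constrains residues, not pole orders. The paper instead identifies the dominant term of formula (\ref{CCM}) as a vertex tends to $L$: the $CCM$ is asymptotic to the circumcenter of $(L_-,L,L_+)$, which recedes to infinity along the perpendicular bisector of $L_-L_+$, yielding expansions $x(z)=u_1/z+u_3+\cdots$, $y(z)=u_2/z+u_4+\cdots$, i.e.\ simple poles with an identified leading direction $[u_1:u_2:0]$. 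Supplying these two missing analyses is what would turn your outline into a proof.
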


The same techniques and tools provide a proof of a result found previously by Dan Reznik (shared via personal communication with S. Tabachnikov):

\begin{thm}
Let $\gamma$, $\Gamma$ be two concentric ellipses in general position admitting a $1$-parameter family of Poncelet $n$-gons. Given $P=(V_1, \ldots, V_n)$ one of these Poncelet $n$-gons, let $Q$ be a new polygon formed by the tangent lines to $\Gamma$ at $V_i$.\\
If $n$ is even, then $A(P)\cdot A(Q)$ stays constant within the Poncelet family.
\end{thm}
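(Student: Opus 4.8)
The plan is to run the same machinery used for Theorem 2: parametrize the Poncelet family by its associated elliptic curve, express both areas as meromorphic functions of the resulting complex parameter, and then invoke Liouville's theorem in the form \emph{a holomorphic function on a compact Riemann surface is constant}. Since $A(P_t)A(Q_t)$ is manifestly meromorphic on that curve, it is enough to prove that it has no poles.

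First I would normalize the configuration. An invertible linear map $L$ multiplies every area by $|\det L|$, so it multiplies $A(P)A(Q)$ by $(\det L)^2$, a factor that is constant along the family; hence constancy of the product is an affine invariant. Simultaneously diagonalizing the two concentric quadratic forms and rescaling, I may therefore assume $\Gamma$ is the unit circle and $\gamma$ a concentric ellipse $x^2/a^2+y^2/b^2=1$. Identifying the plane with $\mathbb{C}$ and writing a vertex on $\Gamma$ as $V_i$ with $|V_i|=1$ (so that $\bar V_i=1/V_i$), the vertices of $Q$ are the meromorphic expressions $W_i=2V_iV_{i+1}/(V_i+V_{i+1})$, and the signed areas become $A(P)=\frac{1}{4i}\sum_i\bigl(V_{i+1}/V_i-V_i/V_{i+1}\bigr)$ and $A(Q)=\frac{1}{i}\sum_i V_{i+1}(V_{i+2}-V_i)/[(V_i+V_{i+1})(V_{i+1}+V_{i+2})]$; these are the genuine holomorphic continuations of the real-analytic areas.

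Following Griffiths--Harris, the incidence curve $E=\{(V,\ell):V\in\Gamma,\ \ell\ni V,\ \ell \text{ tangent to } \gamma\}$ is an elliptic curve on which the Poncelet map is a translation by a fixed $c$ with $nc=0$; the family is swept out by translating the base point $t$, and $V_i(t)=V(t+ic)$ for a fixed degree-two map $V\colon E\to\mathbb{CP}^1$. Hence $A(P_t)$ and $A(Q_t)$ are elliptic functions of $t$. The decisive structural input is the central symmetry $V\mapsto -V$: it preserves both concentric conics and lifts to an involution of $E$, and that involution is fixed-point free, since a fixed point would require a tangent line to $\gamma$ passing through both a circular point and the center, which does not exist for a non-circular $\gamma$. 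A fixed-point-free involution of an elliptic curve is translation by a $2$-torsion element, and for $n$ even this element is $(n/2)c$, giving the central symmetry $V_{i+n/2}(t)=-V_i(t)$. This is precisely where the parity hypothesis is used.

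The main obstacle is the pole cancellation. The poles of $A(P)$ sit where a vertex reaches a circular point ($V_j=0$ or $\infty$), while the poles of $A(Q)$ sit where a side becomes a diameter ($V_{j+1}=-V_j$, which over $\mathbb{C}$ occurs along the isotropic tangents of $\gamma$). I would show that the divisors of $A(P)$ and $A(Q)$ are exactly opposite. Using $V_{i+n/2}=-V_i$, a circular point is reached by the two antipodal vertices $V_j$ and $V_{j+n/2}$ simultaneously, the adjacent vertices of $Q$ collapse to the center, and one checks that $A(Q)$ then vanishes to the order of the pole of $A(P)$; symmetrically, each diameter-pole of $A(Q)$ is a zero of $A(P)$. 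A degree count on $E$ confirms that no residual poles survive. The case $n=4$, where $P$ is a rectangle with $A(P)=4d_1d_2$ and $Q$ the dual rhombus with $A(Q)=2/(d_1d_2)$, exhibits this reciprocal mechanism transparently (the product is the constant $8$) and is the pattern to generalize. Once $A(P_t)A(Q_t)$ is shown to be a pole-free elliptic function, Liouville forces it to be constant, which is the assertion.
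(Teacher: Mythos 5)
Your proposal follows essentially the same route as the paper's proof: parametrize the family by the Griffiths--Harris elliptic curve, view $A(P_t)\cdot A(Q_t)$ as an elliptic function whose only possible poles come from a vertex of $P$ or of $Q$ reaching infinity, invoke the central symmetry $V_{i+n/2}=-V_i$ (available precisely because the conics are concentric and $n$ is even) to cancel each pole of one factor against a zero of the other, and conclude by Liouville. The differences are cosmetic --- you normalize $\Gamma$ rather than $\gamma$ to be a circle and phrase the cancellation as a divisor comparison --- and your sketchy points (why the $2$-torsion translation is exactly $(n/2)c$, and the ``one checks'' order-matching) are left at the same level of detail in the paper itself.
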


One can find several papers with results that share this flavor, like the ones exposed in \cite{Akopyan-billiards}, \cite{Bialy-dan}, \cite{Fierobe-circumcenter}, \cite{Reznik-ellipses}, \cite{Reznik-new-properties}, \cite{Reznik-ballet}, \cite{Reznik-intelligencer} and \cite{Romaskevich-incenter} to mention some.


\section{Poncelet polygons with zero area}

Following the spirit of \cite{G-H}, \cite{Romaskevich-incenter}, and \cite{centers-poncelet}, we complexify and projectivize the picture. That is, even though the objects we work with are initially $2$-real dimension objects, we allow the coordinates to take complex values and add the points at infinity; so from now on we will be working in $\mathbb{CP}^2:= \mathbb{C}^2 \cup {\cal L}_\infty $.

\begin{defi} 
Let $P$ be an oriented $n-$gon with vertices $V_1, \ldots , V_n \in \mathbb{C}^2$, where $V_i=(x_i, y_i)$. The \emph{area} of $P$ is

\begin{equation}\label{area}
A(P)=\frac{1}{2}\sum_{i=1}^{n}(x_iy_{i+1}-x_{i+1}y_i).
\end{equation}

By convention, we set $x_{n+1}:= x_1$, $y_{n+1}:=y_1$, and in general, subindices are taken modulo $n$. 
\end{defi}

Notice the area of a polygon is invariant under equiaffine transformations but not under projective transformations.\\

Recall that, if $\Gamma \subset \mathbb{CP}^2$ is a $C^1$ curve and $p\in \Gamma$ then $T_p\Gamma$ denotes the line tangent to $\Gamma$ that passes through $p$. Similarly $T\Gamma= \{T_p\Gamma : p\in \Gamma\}$.\\

Given two conics $\gamma$, $\Gamma \subset \mathbb{CP}^2$ we say that $\gamma, \Gamma$ are in \emph{general position} if they intersect transversally, i.e. at every point in the intersection $p\in \gamma \cap \Gamma$, we have $T_p\gamma \cap T_p\Gamma=\{p\}$. See Figure \ref{not-transverse} for a counterexample.\\

\begin{figure}
\centering
\includegraphics[scale=0.3]{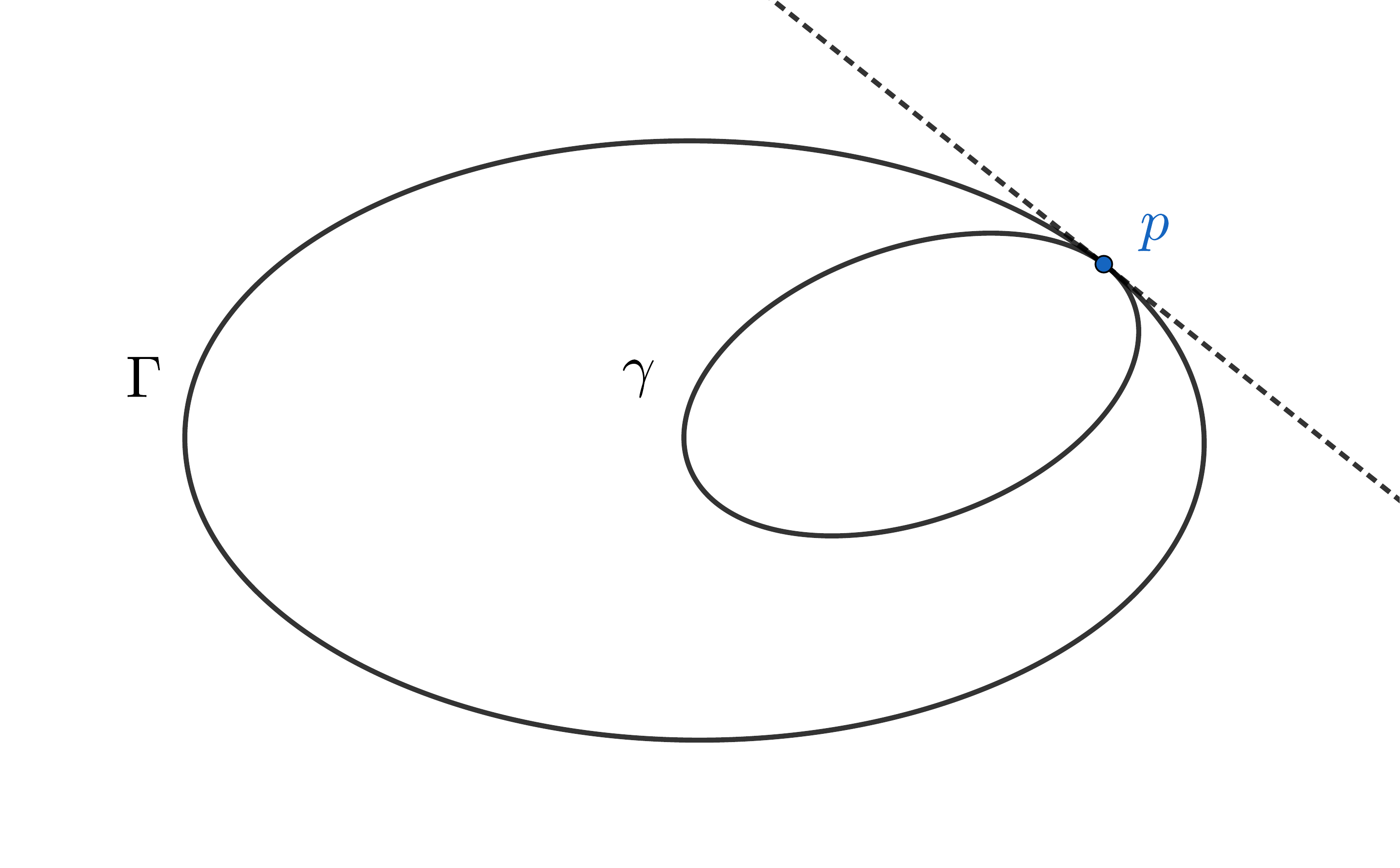}
\caption{The conics $\gamma$ and $\Gamma$ are not in general position, since both tangents $T_p\gamma$ and $T_p \Gamma$ coincide.}
\label{not-transverse}
\end{figure}

\begin{defi} 
Let $\gamma, \Gamma \subset \mathbb{CP}^2$ be a pair of conics that admit a $1$-parameter family of Poncelet $n$-gons. A Poncelet polygon of this family is called \emph{degenerate} if at least one of the vertices $V_i$ satisfy either $V_i\in \gamma \cap \Gamma$ or $T_{V_i}\Gamma \in T\gamma$. This is illustrated in Figure \ref{degenerate}.\\ 
Otherwise, a Poncelet polygon is called \emph{non-degenerate}.
\end{defi}

\begin{figure}
\centering
\includegraphics[scale=0.25]{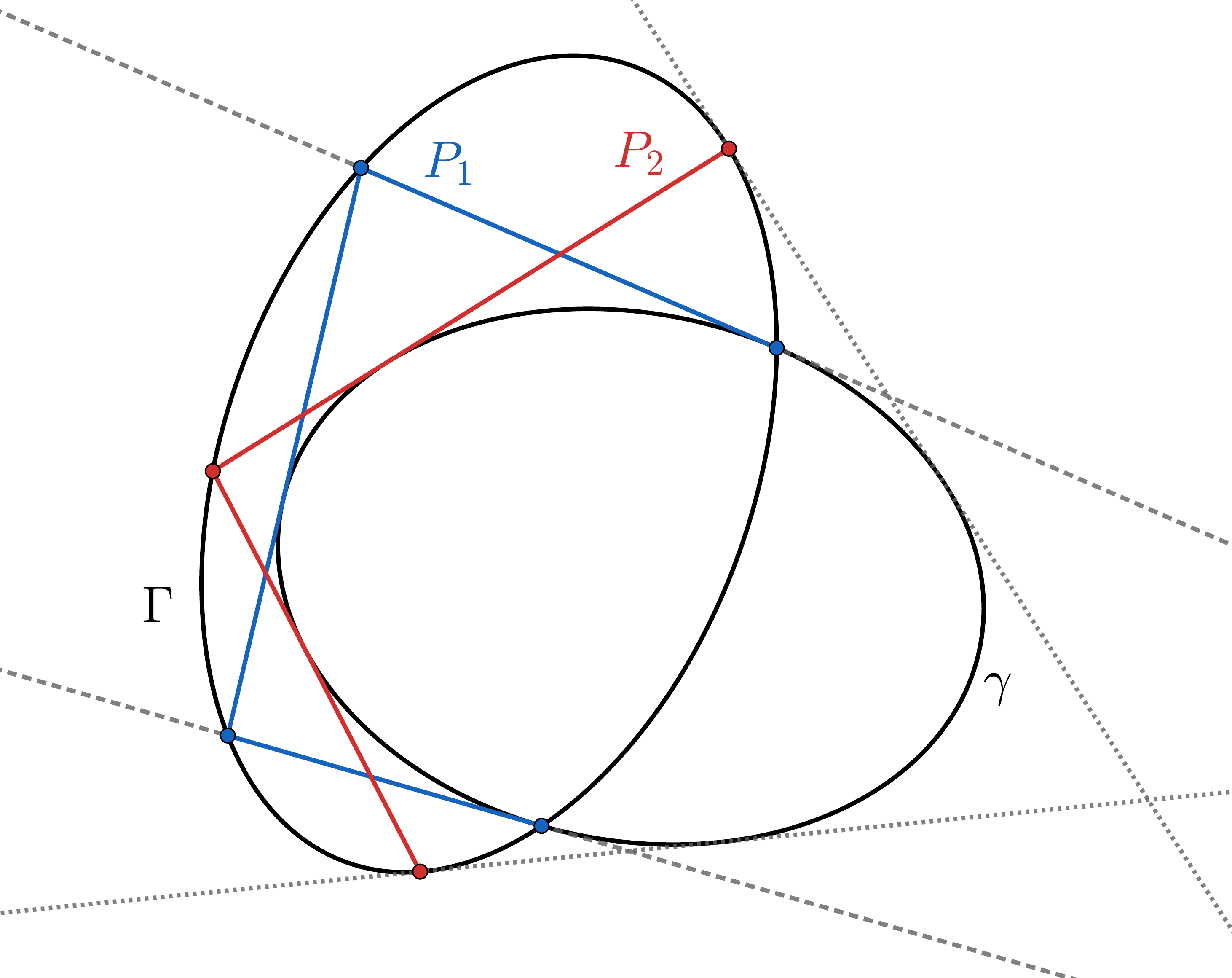}
\caption{The conics $\gamma, \Gamma$ admit a $1$-parameter family of Poncelet hexagons.
The degenerate polygon $P_1$ illustrates the case when one vertex belongs to the intersection $\gamma \cap \Gamma$, while the degenerate polygon $P_2$ corresponds to the case $T_{V_i}\Gamma \in T\gamma$.}
\label{degenerate}
\end{figure}

Notice that if $P$ is a degenerate polygon then:
\begin{itemize}
\item If $V_i \in \gamma \cap \Gamma$, then $V_{i-k}=V_{i+k}$ for all $k$. Intuitively, the polygon ``bends'' at $V_i$.
\item If $T_{V_i}\Gamma \in T\gamma$, then $V_i=V_{i+1}$, and $V_{i-k}=V_{i+1+k}$ for all $k$. The vertices $V_i$ and $V_{i+1}$ ``glue together''.
\end{itemize}
Figure \ref{even} shows a picture of Poncelet polygons that ``are about to collapse''.\\

\begin{figure}
\centering
\includegraphics[scale=0.25]{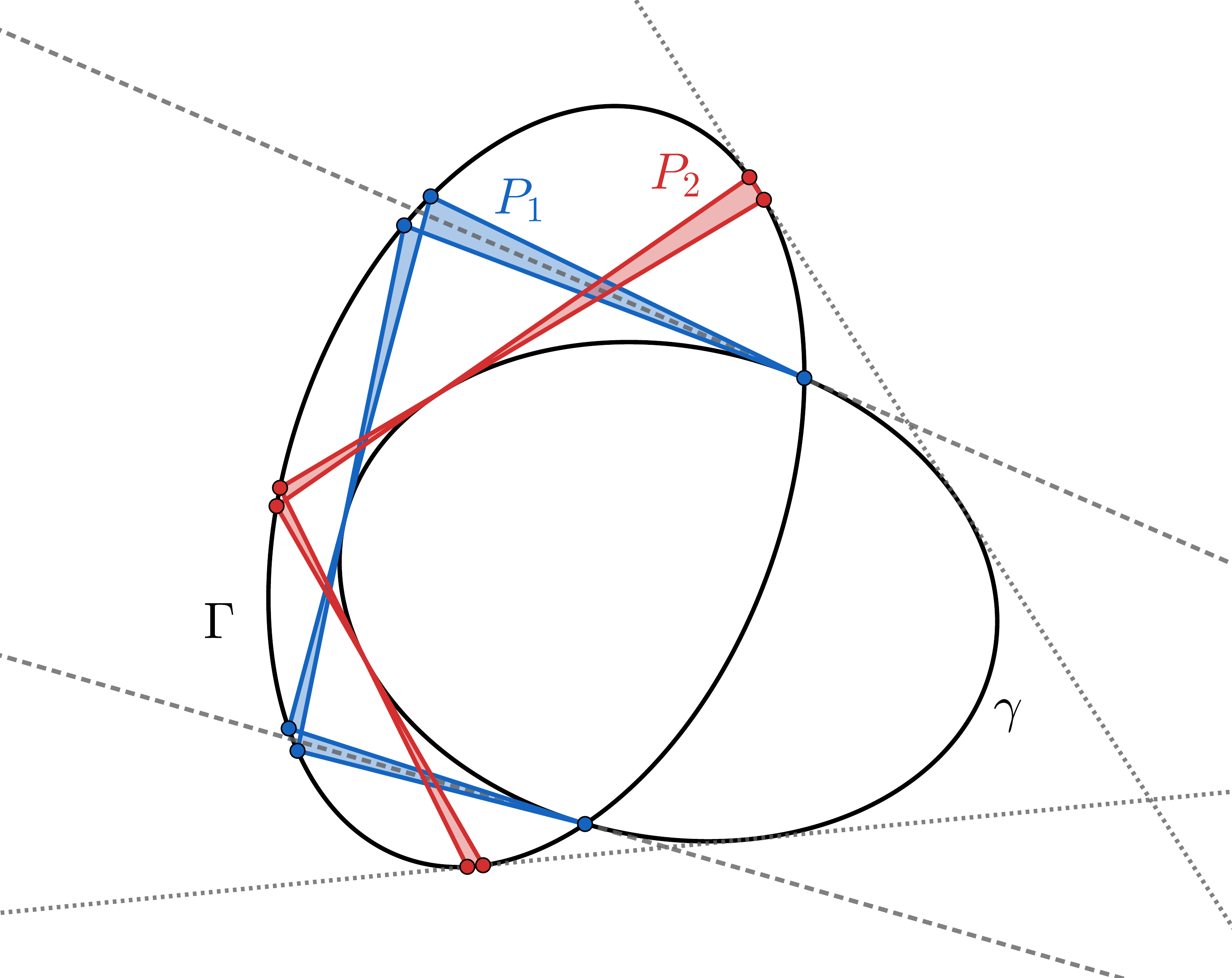}
\figcaption{A Poncelet polygon can degenerate if either two sides coincide or if two vertices coincide. $P_1$ is ``bending'' and $P_2$ has a pair of vertices that are ``gluing together''.}\label{even}
\end{figure}

From this remark, we get the following observations regarding degenerate polygons.

\begin{lem}\label{zero-degenerate}
 If $\gamma$, $\Gamma \subset \mathbb{CP}^2$ admit a $1$-parameter family of Poncelet $n$-gons $P_t$ inscribed in $\Gamma$ and circumscribed about $\gamma$, then  
\begin{enumerate}
\item If $P$ is a degenerate Poncelet polygon, with finite vertices, then $A(P)=0$.
\item If $\gamma, \Gamma$ are conics in general position, then there are $4n$ different degenerate polygons.
\end{enumerate}
\end{lem}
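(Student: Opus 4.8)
The plan is to handle the two parts with different tools: for part (1) I would exploit the combinatorial relations among the vertices recorded just above the statement, and for part (2) the elliptic curve of flags from \cite{G-H}. For part (1), since the vertices are finite I may use the area form directly, writing $2A(P)=\sum_{i=1}^{n}\omega(V_i,V_{i+1})$ with $\omega(u,v)=u_xv_y-u_yv_x$. The key point is that a degenerate polygon retraces itself, so each directed edge is matched by an oppositely oriented copy. If $V_j\in\gamma\cap\Gamma$, the relation $V_{j-k}=V_{j+k}$ means the vertices are fixed by the reflection $i\mapsto 2j-i$ of $\mathbb{Z}/n$; I would then verify that the involution $\theta(i)=2j-i-1$ reverses every edge, $\omega(V_{\theta(i)},V_{\theta(i)+1})=-\omega(V_i,V_{i+1})$, so that reindexing the sum by the bijection $\theta$ yields $2A(P)=-2A(P)$ and hence $A(P)=0$. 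The case $T_{V_j}\Gamma\in T\gamma$ is identical, using $V_{j-k}=V_{j+1+k}$ and the involution $\theta(i)=2j-i$.

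For part (2) I would pass to the curve of flags $E=\{(V,\ell):V\in\Gamma,\ \ell\in T\gamma,\ V\in\ell\}$, which by \cite{G-H} is a smooth elliptic curve. It carries two degree-two maps: the vertex projection $\pi\colon E\to\Gamma$ and the map $\rho\colon E\to\gamma$ sending a flag to the tangency point of $\ell$ with $\gamma$, with deck involutions $\iota_A$ (swap the two tangents from $V$) and $\iota_B$ (swap the two points of $\ell\cap\Gamma$); the Poncelet map is the translation $T=\iota_A\circ\iota_B$ by an $n$-torsion element $\tau$. In general position $\pi$ branches exactly over the four points of $\gamma\cap\Gamma$ and $\rho$ exactly over the four common tangents, so the fixed loci $\mathrm{Fix}(\iota_A)$ and $\mathrm{Fix}(\iota_B)$ consist of eight distinct flags, which are precisely the flags realizing the two degeneracy types. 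Each fixed locus is a coset of the $2$-torsion $E[2]$, say $\mathrm{Fix}(\iota_A)=a+E[2]$ and $\mathrm{Fix}(\iota_B)=b+E[2]$ with $2(a-b)=\tau$.

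A labeled polygon of the family is a point $t\in E$ whose vertices are $\pi(t+k\tau)$, and it is degenerate exactly when its $T$-orbit meets $S=\mathrm{Fix}(\iota_A)\cup\mathrm{Fix}(\iota_B)$; hence the degenerate members correspond to $D=\bigcup_{k=0}^{n-1}(S-k\tau)=(a+H)\cup(b+H)$, where $H=E[2]+\langle\tau\rangle$. Counting $|D|$ is then pure group theory on $E$: one has $|H|=4n/|E[2]\cap\langle\tau\rangle|$, and $a+H=b+H$ precisely when $a-b\in H$. I expect this last computation to be the main obstacle, since it bifurcates on the parity of $n$: for odd $n$ one gets $|E[2]\cap\langle\tau\rangle|=1$ together with $a-b\in H$ (a single coset of size $4n$), whereas for even $n$ one gets $|E[2]\cap\langle\tau\rangle|=2$ together with $a-b\notin H$ (two disjoint cosets of size $2n$). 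In either case the two effects cancel to give $|D|=4n$, which is the assertion; the delicate step is the membership $a-b\in H$, equivalent to solving $2m\equiv1\pmod n$, so that everything ultimately rests on the arithmetic of $n$-torsion versus $2$-torsion on $E$.
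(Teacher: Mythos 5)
Your proposal is correct, but part (2) follows a genuinely different route from the paper's. For part (1) the paper simply declares the claim straightforward; your edge-reversing involution $\theta$ combined with the antisymmetry of $\omega$ is exactly that intended argument made explicit, and both of your index computations ($V_{\theta(i)}=V_{i+1}$, $V_{\theta(i)+1}=V_i$ in each degeneracy case) check out. For part (2), the paper stays purely combinatorial and splits by parity: for even $n$ it shows that a ``bending'' polygon has two vertices in $\gamma\cap\Gamma$ while a ``gluing'' polygon has two sides among the four common tangents, that each geometric degenerate polygon admits exactly $n$ valid labelings, and then B\'ezout (applied to $\gamma\cap\Gamma$ and to the dual conics) gives $2n+2n=4n$; for odd $n$ every degenerate polygon has exactly one vertex of each type, giving $4\cdot n=4n$. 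The elliptic curve appears in the paper only later, in the proof of Theorem \ref{jefe}. You instead front-load that machinery and convert the count into coset arithmetic: the degenerate flags form $(a+H)\cup(b+H)$ with $H=E[2]+\langle\tau\rangle$, and your torsion computations are right --- for odd $n$, $a-b-m\tau\in E[2]$ where $2m\equiv 1 \pmod{n}$, so one coset of size $4n$; for even $n$, $a-b\in H$ would force $(2k-1)\tau=0$, impossible, so two disjoint cosets of size $2n$ each. What the paper's route buys is elementarity and the explicit geometric picture of degenerate polygons that Section 3 reuses for the limit analysis; what your route buys is a parity-uniform statement, a conceptual explanation of why the two degeneracy types coincide for odd $n$ but split into two disjoint families for even $n$, and a description of the degenerate locus directly as a subset of the flag curve, which is the form in which the lemma is actually invoked (as the $4n$ zeros of the area function) in the proof of Theorem \ref{jefe}. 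Two hypotheses you use silently, though both are equally implicit in the paper: $\tau$ must have exact order $n$ (so that $|\langle\tau\rangle|=n$ in your computation of $|H|$), and the flag-to-labeled-polygon correspondence must be a bijection, so that $|D|$ really counts polygons in the sense of the paper's Note on labelings.
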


\textbf{Note:} The order of the label in the vertices matter: that is, if $P=(V_1, \ldots, V_n)$ and $P'=(V'_1,\ldots, V'_n)$ satisfy $V_i=V'_{i+c}$ for all $i$, with $c$ constant, then we consider $P$ and $P'$ to be \textbf{equal} if and only if $\boldsymbol{c \equiv 0 (\textbf{mod } n)}$.\\
 
\begin{proof}
The first observation is straightforward.\\
For the second part, we divide in two cases:

\begin{itemize}
\item \textbf{n even.} 
Recall $P=(V_1, \ldots, V_n)$ is degenerate if one of the following happens: 

	\begin{itemize}
	\item \textit{$V_i \in \gamma \cap \Gamma$ for some index $i$.} \\
	Since $V_{i-k}=V_{i+k}$, in particular one has $$V_{i+\frac{n}{2}-1}=V_{i-\frac{n}{2}+1}=V_{i+\frac{n}{2}+1},$$ (the last equality comes from the congruence mod $n$ in the indices),  which means $V_{i+\frac{n}{2}}$ has a unique tangent to $\gamma$, thus $V_{i+\frac{n}{2}} \in \gamma \cap \Gamma$. \\
	If $P$ is one of these degenerate polygons, then it has $n/2+1$ different vertices: two in the intersection $\gamma\cap\Gamma$ and the rest repeating twice. In total, there are $n$ different manners one can list the vertices of this polygon in the correct order: if one starts from a vertex in the intersection, there is just one way to list the rest; but if one starts with any double vertex $V_i$, then the next vertex can be either $V_{i+1}$ or $V_{i-1}$.\\
	Since $\gamma, \Gamma$ are conics in general position, by B\'ezout's theorem, $|\gamma \cap \Gamma|=4$. Therefore we have $2n$ of these degenerate polygons.\\
	
	\item \textit{$T_{V_i}\Gamma \in T\gamma$ for some $i$.} \\
	Then $V_i=V_{i+1}$, and $V_{i-k}=V_{i+1+k}$. In particular $$V_{i-\frac{n}{2}}=V_{i+\frac{n}{2}+1}=V_{i-\frac{n}{2}+1},$$ meaning $T_{V_{i-\frac{n}{2}}}\Gamma \in T\gamma$. \\
	As before, if $P$ is one of these degenerate polygons, it has $n/2$ different vertices, all of them double vertices, with exactly two of these vertices satisfying $T_{V_i}\Gamma \in T\gamma$. Similarly, there are $n$ different ways one can list the vertices of $P$ in the correct order.\\
	In the dual space, (the space of lines) $T\gamma$ and $T\Gamma$ are conics as well, so $|T\gamma \cap T\Gamma|=4$. There are then $2n$ degenerate polygons of this type.\\
	
	In the case where $\gamma$ and $\Gamma$ are in general position, these two cases are disjoint, and so we have $4n$ degenerate Poncelet polygons in total.
	\end{itemize}
 
\item \textbf{n odd.} Applying a similar analysis to the previous case, a polygon $P=(V_1,\ldots V_n)$ is degenerate if and only if exactly one vertex $V_i$ belongs to the intersection $\gamma \cap \Gamma$ and $T_{V_{i+\frac{n-1}{2}}}\Gamma \in T\gamma$ (see Figure \ref{odd}). Then there are $4n$ degenerate polygons.

\begin{figure}
\centering
\includegraphics[scale=0.27]{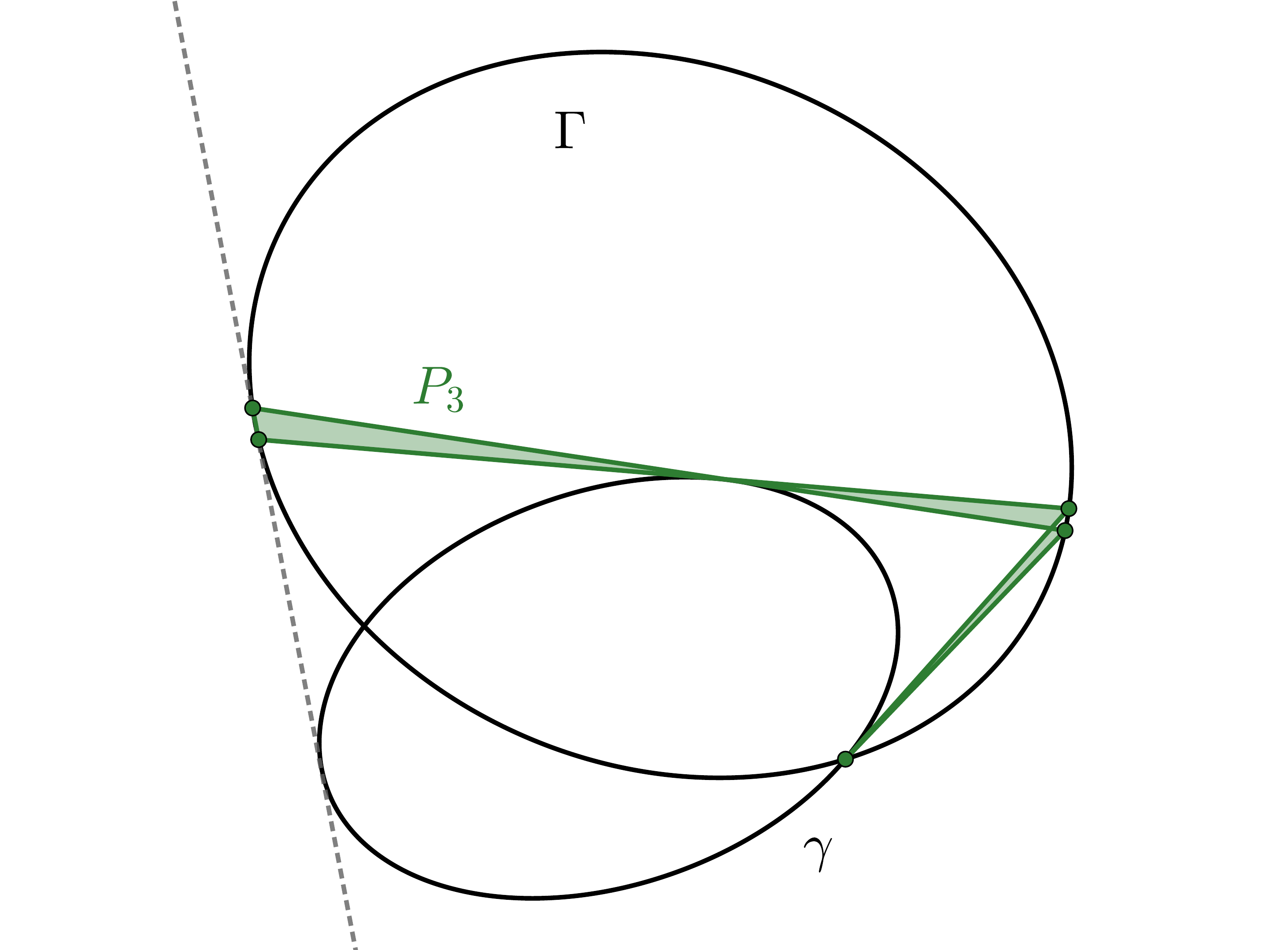}
\figcaption{If $n$ is odd (in this case $n=5$), then each degenerate Poncelet polygon has one vertex where the polygon ``bends'' (when $V_i\in \gamma \cap \Gamma$) and two vertices that ``glue together'' (if $T_{V_j}\Gamma \in T\gamma$, then $V_j=V_{j+1}$). }\label{odd}
\end{figure}

\end{itemize}
\end{proof}

\begin{thm}\label{jefe} 
Let $\gamma, \Gamma \subset \mathbb{CP}^2$ be a pair of conics in general position that admit a $1$-parameter family of Poncelet $n-$gons, with vertices in $\Gamma$ and sides tangent to $\gamma$. Suppose there is a non-degenerate Poncelet polygon P such that $A(P)=0$. Then every Poncelet polygon of the family has area zero.
\end{thm}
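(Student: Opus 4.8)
The plan is to follow the Griffiths--Harris philosophy cited in \cite{G-H}: pass to the elliptic curve that parametrizes the Poncelet family, reinterpret $A$ as an elliptic function, and then apply the Liouville-type theorem that a non-constant elliptic function has exactly as many zeros as poles. Concretely, I would build the incidence curve $E = \{(p,\ell) : p \in \Gamma,\ \ell \in T\gamma,\ p \in \ell\}$. The projection $(p,\ell) \mapsto p$ exhibits $E$ as a double cover of $\Gamma \cong \mathbb{CP}^1$, branched exactly over the four points of $\gamma \cap \Gamma$; by Riemann--Hurwitz $E$ is a smooth genus-one curve, and the Poncelet map is translation by an $n$-torsion element $\tau$, so $n\tau = 0$. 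A point $e \in E$ encodes a based Poncelet polygon with vertices $V_i = V(e+(i-1)\tau)$, where $V:E\to\Gamma$ is the vertex projection, and as $e$ runs over $E$ we recover the whole $1$-parameter family. Since each coordinate is a rational function of $e$ via the rational parametrization of the conic, the area $A$ from $(\ref{area})$ is a rational, hence meromorphic, function on the compact curve $E$. The theorem is then equivalent to the statement that, whenever $A \not\equiv 0$, the only zeros of $A$ are the degenerate polygons.

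The next step is to compute $\deg A$ by locating its poles. Writing $V_i = [X_i:Y_i:Z_i]$, one has $A = \tfrac12 \sum_i (X_iY_{i+1} - X_{i+1}Y_i)/(Z_iZ_{i+1})$, so $A$ can blow up only where some vertex reaches infinity, i.e. where $V(e+(i-1)\tau) \in \Gamma \cap \mathcal{L}_\infty$. In general position $\Gamma$ meets $\mathcal{L}_\infty$ in two points, each having two preimages under the degree-two map $V$, giving four points of $E$; translating each back by $-(i-1)\tau$ for $i=1,\dots,n$ produces $4n$ pole locations. A short local computation shows that the singular part of $A$ at such a point is controlled by $V_i \times (V_{i+1}-V_{i-1})$, which is nonzero generically, so each pole is simple and no two vertices escape to infinity at once. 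Hence $\deg A = 4n$.

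Finally I would invoke Lemma~\ref{zero-degenerate}: there are exactly $4n$ degenerate polygons, each a distinct point of $E$, and (having finite vertices) each is a zero of $A$. These contribute total multiplicity at least $4n$ to the zero divisor of $A$. But the Liouville-type equality says the number of zeros equals the number of poles, namely $\deg A = 4n$. Consequently the $4n$ degenerate polygons already exhaust the zeros of $A$, each with multiplicity one, leaving no room for a zero at a non-degenerate polygon. Therefore $A(P)=0$ at a non-degenerate $P$ forces $A \equiv 0$, which is exactly the claim.

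The main obstacle is the bookkeeping at $\mathcal{L}_\infty$. I must verify that the poles really are simple and number exactly $4n$, ruling out accidental cancellation in the singular part and the coincidence of two vertices at infinity, and I must check that none of the $4n$ degenerate polygons has a vertex on $\mathcal{L}_\infty$; otherwise a putative zero would instead be a pole and the matching of the two divisors would break. This is precisely where the general-position hypothesis is needed, to guarantee that the degeneration locus $\gamma\cap\Gamma$ and $T\gamma\cap T\Gamma$, the torsion translates $(i-1)\tau$, and the fibre $\Gamma\cap\mathcal{L}_\infty$ sit in sufficiently generic relative position for the zero and pole divisors to coincide.
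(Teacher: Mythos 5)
Your proposal is correct and follows essentially the same route as the paper's own proof: the incidence (flag) elliptic curve, the area reinterpreted as a meromorphic elliptic function with $4n$ simple poles located at the polygons having a vertex on $\mathcal{L}_\infty$, the $4n$ zeros supplied by Lemma \ref{zero-degenerate}, and the Liouville-type equality of zeros and poles forcing $A$ to be constant, hence identically zero. The only difference is bookkeeping: you count poles as $4$ base flags translated by the $n$-torsion point, while the paper counts $2n=|D_n|$ flags for each of the two points of $\Gamma\cap\mathcal{L}_\infty$ --- the same $4n$ either way.
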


Before going over the proof, let us examine the particular case when $n=4$.

\begin{lem}\label{bowtie} A quadrilateral $Q =(V_1, V_2, V_3, V_4)$ has area zero if and only if the diagonals $V_1V_3$ and $V_2V_4$ are parallel.
\end{lem}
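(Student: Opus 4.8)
The plan is to prove this by a direct computation, reducing both sides of the equivalence to the vanishing of a single $2\times 2$ determinant. First I would write out the algebraic area from formula (\ref{area}) in the case $n=4$:
\[
A(Q) = \tfrac12\big[(x_1 y_2 - x_2 y_1) + (x_2 y_3 - x_3 y_2) + (x_3 y_4 - x_4 y_3) + (x_4 y_1 - x_1 y_4)\big].
\]
Next I would translate the geometric hypothesis. The diagonals $V_1V_3$ and $V_2V_4$ carry the direction vectors $V_3-V_1$ and $V_4-V_2$, and they are parallel exactly when these vectors are linearly dependent, i.e. when the determinant of the matrix whose rows are $V_3-V_1$ and $V_4-V_2$ vanishes:
\[
(x_3 - x_1)(y_4 - y_2) - (x_4 - x_2)(y_3 - y_1) = 0.
\]

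The crux of the argument is a one-line expansion showing that this determinant equals precisely twice the bracketed shoelace sum above; in other words, $A(Q) = \tfrac12\,\det\!\big(V_3-V_1,\;V_4-V_2\big)$. This is the familiar fact that the signed area of a planar quadrilateral is half the cross product of its diagonals. Once this identity is established, the two conditions ``$A(Q)=0$'' and ``the diagonals are parallel'' have literally the same defining equation, so both implications follow simultaneously without any case analysis.

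The point requiring care, rather than a genuine obstacle, is the meaning of \emph{parallel} in the complexified, projectivized setting of Section~2: two lines are parallel when they meet on the line at infinity $\mathcal{L}_\infty$, which is exactly the condition that their direction vectors be linearly dependent, i.e. that the determinant above vanish. With this interpretation the computation is valid verbatim over $\mathbb{C}$, and it is this $n=4$ identity that will feed into the analysis of Theorem~\ref{jefe}, where the vanishing of the area must be controlled across the whole Poncelet family.
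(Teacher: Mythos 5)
Your proof is correct, but it takes a different route from the paper's. The paper triangulates $Q$ along the diagonal $V_1V_3$, writing $A(V_1,V_2,V_3,V_4)=A(V_1,V_2,V_3)-A(V_1,V_4,V_3)$, and then uses the fact that two signed triangle areas over the common base $V_1V_3$ agree exactly when the line $V_2V_4$ through the apexes is parallel to that base; you collapse both steps into the single identity $A(Q)=\tfrac12\det\left(V_3-V_1,\;V_4-V_2\right)$, verified by expanding the shoelace sum. These are two organizations of the same bilinearity computation, but yours has a concrete advantage in this paper's setting: over $\mathbb{C}$ the statement ``equal areas iff parallel'' for the two triangles is itself nothing but a determinant identity, so reducing everything to one explicit determinant makes the complexified, projectivized reading (parallel $=$ meeting on ${\cal L}_\infty$ $=$ linearly dependent direction vectors) completely unambiguous --- a point you rightly address and the paper leaves implicit. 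One slip in your wording: the determinant $(x_3-x_1)(y_4-y_2)-(x_4-x_2)(y_3-y_1)$ equals the bracketed shoelace sum itself (which is $2A(Q)$), not twice that sum; your displayed conclusion $A(Q)=\tfrac12\det\left(V_3-V_1,\;V_4-V_2\right)$ is the correct identity, and since only the vanishing of $A(Q)$ matters, the constant has no effect on the equivalence.
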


\begin{proof}
Notice $A(V_1,V_2,V_3,V_4)=A(V_1,V_2,V_3)+A(V_1,V_3,V_4) = A(V_1,V_2,V_3)- A(V_1,V_4, V_3)$.\\

The areas $A(V_1, V_2, V_3)$ and $A(V_1, V_4, V_3)$ are equal if and only if $V_1V_3$ is parallel to $V_2V_4$. This proves the result.

\end{proof}

\begin{prop}
Let $\gamma$ and $\Gamma$ be conics admitting a $1$-parameter family of Poncelet quadrilaterals $P_t$. If there is one non-degenerate Poncelet quadrilateral with area zero, then all the Poncelet quadrilaterals have area zero.\\
Moreover, the locus of each $CCM(P_t)$ and $CM_2(P_t)$ is a point at infinity, with $$CM_2(P_t)=\{[a:b:0]\} \text{ and } CCM(P_t)=\{[-b:a:0]\},$$ for some $a,b \in \mathbb{C}$ not simultaneously $0$.
\end{prop}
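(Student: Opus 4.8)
The plan is to reduce both assertions to a single structural fact: the diagonals $V_1V_3$ and $V_2V_4$ of every Poncelet quadrilateral in the family pass through one common point $X_0\in\mathbb{CP}^2$ that does not depend on $t$. Granting this, Lemma \ref{bowtie} converts the area question into the question of where $X_0$ lies, and a short moment computation then pins down both loci. Throughout I would work in the projectivized picture, so that ``the diagonals are parallel'' reads as ``the diagonals meet on $\mathcal{L}_\infty$''.

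To produce $X_0$ I would pass to the elliptic curve $E$ that parametrizes the family and write the vertices as $V_j\leftrightarrow z+(j-1)c$, where $c$ is a primitive $4$-torsion element (this is exactly the closure condition for $n=4$) and $z$ is the free parameter; let $\pi\colon E\to\Gamma\cong\mathbb{CP}^1$ be the natural degree-two projection. Both diagonals join parameters differing by the $2$-torsion element $\tau:=2c$ (so $\tau\neq0$): the chord $V_1V_3$ joins $\pi(z)$ and $\pi(z+\tau)$, while $V_2V_4$ joins $\pi(z+c)$ and $\pi(z+c+\tau)$. Because $\tau$ is $2$-torsion and the deck involution of $\pi$ has the form $p\mapsto k-p$, the assignment $\sigma_\tau(\pi(p)):=\pi(p+\tau)$ descends to a well-defined projective involution of $\Gamma$. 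By the classical description of involutions on a conic, all chords joining a point to its $\sigma_\tau$-image pass through the fixed center of the involution; call it $X_0$. Since both diagonals are chords for the \emph{same} involution $\sigma_\tau$, they both pass through $X_0$, and $X_0$ is independent of $z$.

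Part one is then immediate. The four vertices are distinct points of $\Gamma$, so no three are collinear and the diagonals are distinct lines through $X_0$; hence by Lemma \ref{bowtie} we have $A(P_t)=0$ exactly when these two distinct concurrent lines are parallel, i.e.\ exactly when $X_0\in\mathcal{L}_\infty$. As $X_0$ is the same point for the whole family, either $X_0$ is finite, and then no non-degenerate member has zero area, or $X_0\in\mathcal{L}_\infty$ and every member has zero area. The hypothesis rules out the first alternative, so all the $P_t$ have area zero and all diagonals are parallel to the single fixed direction $X_0=[a:b:0]$. (Here non-degeneracy is exactly what excludes the coincident-diagonal case.)

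For the ``moreover'' part I would compute the two centers on the triangulation $T_1=(V_1,V_2,V_3)$, $T_2=(V_1,V_3,V_4)$. Writing $A(T_1)=-A(T_2)=:\alpha$, which holds precisely because the area vanishes, the defining weighted sums give $A(P_t)\,CM_2(P_t)=\alpha\,(G_1-G_2)=\tfrac{\alpha}{3}(V_2-V_4)$, whose direction is that of the diagonal $V_2V_4$, namely $[a:b:0]$; hence $CM_2(P_t)=[a:b:0]$ for all $t$. Likewise $A(P_t)\,CCM(P_t)=\alpha\,(C_1-C_2)$, and since both circumcenters lie on the perpendicular bisector of the common edge $V_1V_3$, the vector $C_1-C_2$ is orthogonal to $V_1V_3$, i.e.\ to $[a:b:0]$; this yields $CCM(P_t)=[-b:a:0]$. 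The numerators are nonzero on non-degenerate members, and the finitely many degenerate members (and any member with a vertex at infinity) are covered by continuity. The main obstacle is the key fact itself: making the descent of translation to a projective involution on $\Gamma$ rigorous—well-definedness and the behaviour at the $2$-torsion $\tau$—after which the classical ``chords of an involution are concurrent'' statement supplies $X_0$ and everything else is bookkeeping.
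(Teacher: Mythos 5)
Your proposal is correct, and its skeleton is the same as the paper's: establish that the diagonals of every quadrilateral in the family pass through one fixed point, use Lemma \ref{bowtie} to conclude that this point must lie on ${\cal L}_\infty$ (so all areas vanish), and then locate the two centers. The genuine difference is how the fixed point is obtained. The paper does it elementarily: a projective transformation makes $\gamma, \Gamma$ concentric, the Poncelet map then commutes with reflection in the common center, so the diagonals of every member pass through that center. You instead work on the Griffiths--Harris elliptic curve, note that both diagonals join parameters differing by the $2$-torsion element $\tau=2c$, check that translation by $\tau$ descends through the two-to-one projection (because $-\tau=\tau$, translation commutes with the deck involution $p\mapsto k-p$) to a nontrivial involution of $\Gamma$, and invoke the classical fact that the chords of an involution of a conic are concurrent. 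That is heavier machinery, but it is machinery the paper builds anyway for Theorem \ref{jefe}, and your descent argument is sound. For the ``moreover'' part the two arguments are the same computation in different guises: the paper intersects the perpendicular bisectors of the two parallel diagonals for $CCM$, and two centroid lines for $CM_2$, whereas you read off the direction in which the numerator of the defining weighted sum blows up; your vector $C_1-C_2$ lies precisely along the paper's perpendicular bisector of $V_1V_3$, and $G_1-G_2=\tfrac13(V_2-V_4)$ is the paper's centroid line.

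One caveat, which afflicts the paper's argument exactly as much as yours: the assertion that the $CCM$ numerator is nonzero on non-degenerate members can fail. If the four vertices of a member are concyclic, then $C_1=C_2$, your numerator $\alpha(C_1-C_2)$ vanishes, and correspondingly the paper's two perpendicular bisectors coincide instead of meeting in a single point at infinity. This situation does occur, and when it does it is an all-or-nothing property of the family: the midpoints of the parallel diagonals lie on the diameter of $\Gamma$ conjugate to the direction $[a:b:0]$, so every member is concyclic precisely when that conjugate diameter is perpendicular to $[a:b:0]$, i.e.\ when the diagonal direction is an axis direction of $\Gamma$. A concrete instance is the image of the family of squares inscribed in a circle under a projective map sending the center to ${\cal L}_\infty$: there every member of the resulting zero-area Poncelet family is concyclic, the $CCM$ numerator vanishes identically on the family, and $CCM$ cannot be recovered by continuity from within the family. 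So the $CCM$ half of the statement needs either an added genericity hypothesis or a separate treatment of this case; but since this gap is inherited from the paper's own proof rather than introduced by you, it does not count against your proposal as a reconstruction of the result.
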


\begin{proof}
Recall that, if $\gamma, \Gamma$ admit a $1$-parameter family of Poncelet quadrilaterals $P_t$, then the diagonals of $P_t$ for any $t$, intersect at a fixed point. This can be proved by taking a projective transformation that makes $\Gamma$ and $\gamma$ concentric and using the fact that the Poncelet map commutes with the reflection with respect to the center of $\gamma$ and $\Gamma$. \\

If one non-degenerate Poncelet quadrilateral has area zero, by Lemma \ref{bowtie} the diagonals must be parallel and so they intersect at some point at infinity, say $[a:b:0]$. The diagonals of any other Poncelet quadrilateral then also intersect at $[a:b:0]$ and hence are parallel (see Figure \ref{PonceletBowtie}).\\

Recall that if $P$ is a quadrilateral then $CCM(P)$ lies in the intersection of the perpendicular bisectors of the diagonals. Therefore $CCM(P_t)=\{[-b:a:0]\}$. Similarly $CM_2(P)$ lies in the intersection of the line connecting the centroids of $V_1 V_2 V_4$ and $V_2 V_3 V_4$, and the line connecting the centroids of $V_1 V_2 V_3$ and $V_1 V_3 V_4$. In this case, these lines are both parallel to the diagonals, and so $CM_2(P_t)=\{[a:b:0]\}$. 
\end{proof}

\begin{figure}
\centering
\includegraphics[scale=0.3]{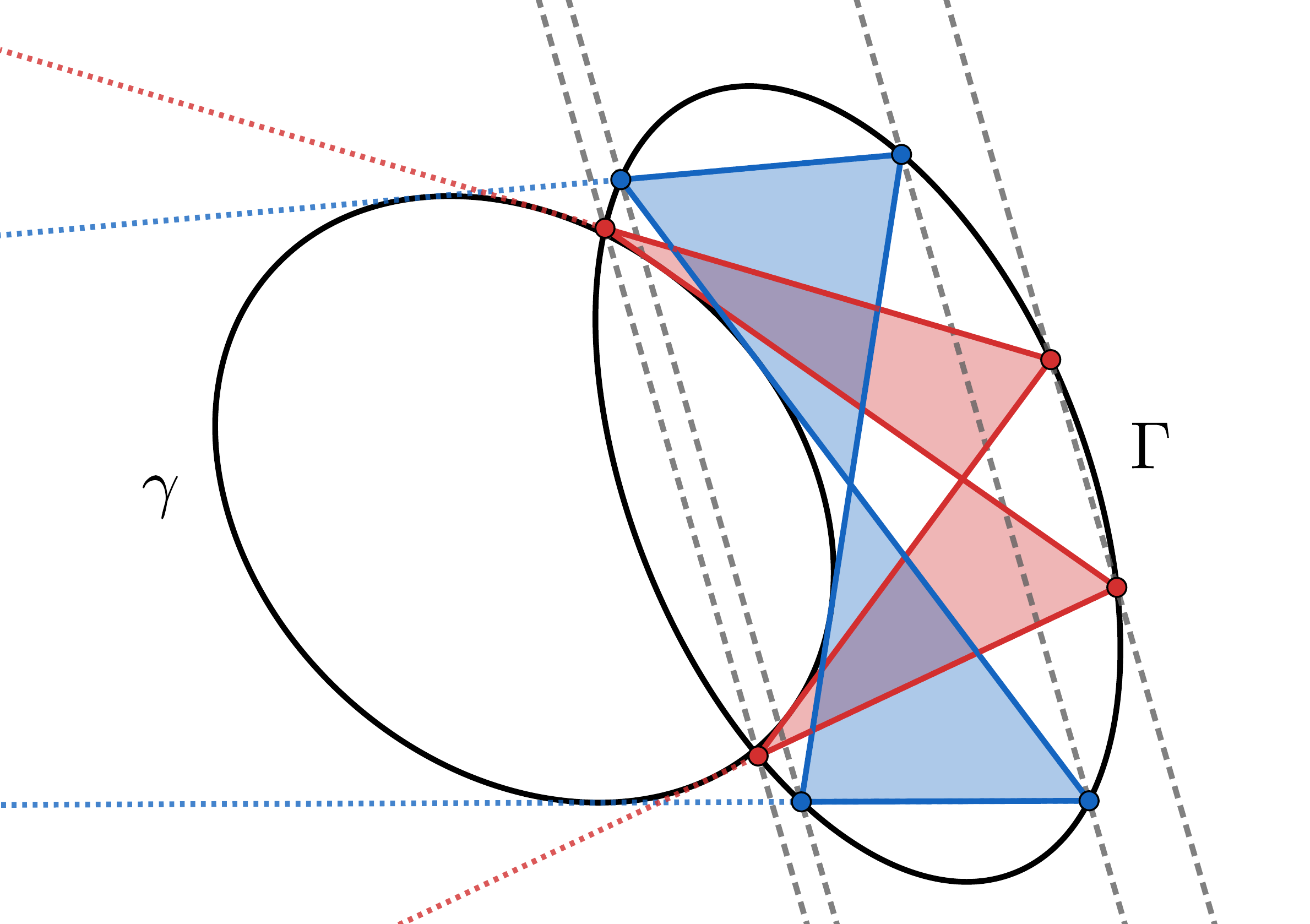}
\caption{If the diagonals of one non-degenerate Poncelet quadrilateral are parallel, then the diagonals of any other Poncelet quadrilateral in the family are also parallel.}
\label{PonceletBowtie}
\end{figure}

\begin{proof}
\textit{(of Theorem \ref{jefe})} The framework of Griffiths and Harris exposed in \cite{G-H} constitute the foundations of this proof. Papers like  \cite{centers-poncelet} make use of their approach to prove similar results.\\
Recall that any conic in $\mathbb{CP}^2$ is isomorphic to $\hat{\mathbb{C}}$ via stereographic projection from any point. \\
Consider the set of flags $${\cal E}=\{(p, \ell) : p\in \Gamma, p \in \ell, \ell \in T\gamma \}.$$
The projection $\pi: {\cal E} \rightarrow \Gamma$ given by $\pi(p, \ell)=p$ is a two-to-one map, with four branch points: $(q, \ell)$ where $q\in \gamma \cap \Gamma$. Computing the Euler characteristic, one obtains that ${\cal E}$ is topologically a torus. Moreover ${\cal E}\subset \Gamma\times T\gamma$ is an elliptic curve.\\
For the Poncelet map, consider the involutions $$ \sigma(p, \ell)=(p',\ell),\hspace{1cm} \tau(p', \ell)=(p', \ell').$$
See Figure \ref{involution}.\\

\begin{figure}
\centering
\includegraphics[scale=0.3]{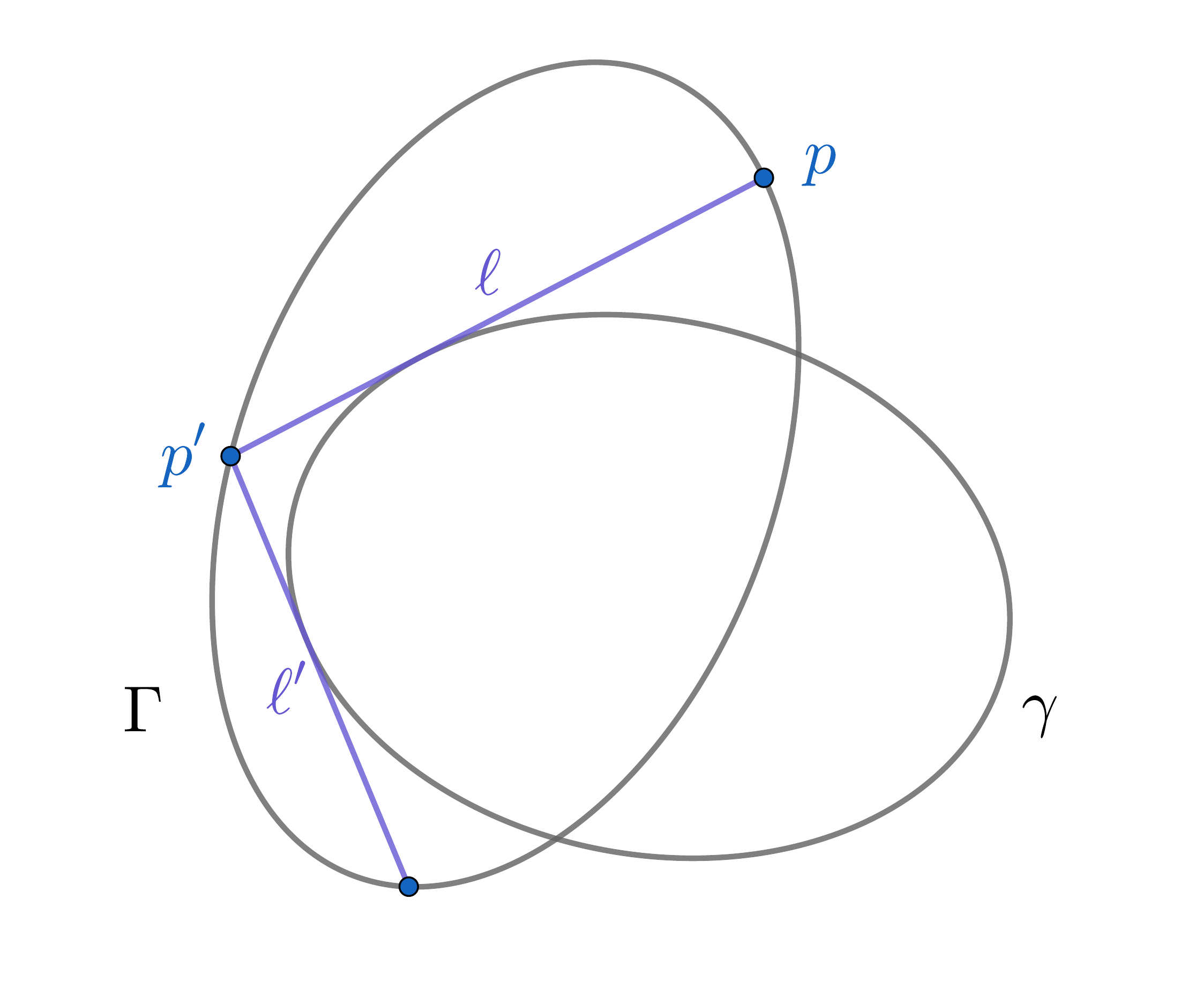}
\caption{ $\sigma(p, \ell)=(p',\ell)$, and $\tau(p', \ell)=(p', \ell')$.}
\label{involution}
\end{figure}

The Poncelet map is defined as $T=\tau\circ\sigma$. Since $\tau$ and $\sigma$ are involutions, ${\cal E}$ admits a parameter $t$ in which the Poncelet map is a translation of the torus, $T(t)=t+c$, with $c$ some constant.\\
 
Now, let's examine the area function for Poncelet polygons. Notice that each flag $(p, \ell)$ gives rise to a unique Poncelet polygon with orientation.
Also, the group generated by $\sigma$ and $\tau$ is precisely the dihedral group $D_n$. If $Z_n\subset D_n$ is the cyclic group of order $n$, observe that $A: {\cal E} \rightarrow \hat{\mathbb{C}}$ is a $Z_n$-invariant meromorphic function, and since it is defined over a torus, $A$ is an elliptic function.\\

For conics $\gamma, \Gamma$ in general position, the area has a simple pole exactly when one of the vertices goes to infinity. Then, $A$ is an elliptic function of order $4n$, with $2n=|D_n|$ simple poles coming from each one of the points of $\Gamma$ at ${\cal L}_\infty$. The poles are simple because $\gamma$ and $\Gamma$ are in general position.\\

By Lemma \ref{zero-degenerate}, $A$ has at least $4n$ zeroes. If there was at least one non-degenerate Poncelet polygon with area zero, then this would contradict the fact that every non-constant elliptic function of order $m$ has exactly $m$ zeros. This implies that $A$ must be constant.
\end{proof}


\section{Limit of $CCM$ and $CM_2$ for Poncelet polygons}

From now on, we focus only on the case when not all Poncelet polygons have area zero. \\

Consider $P=(V_1, \ldots , V_n)$ a degenerate polygon with finite vertices. Thanks to Theorem \ref{jefe} we know we can find a $1$-parameter family of non-zero-area Poncelet polygons $P_t=(W_0(t), \ldots , W_n(t))$ approaching to $P$ as $t$ goes to zero. Since the area of each $P_t$ is not zero, the Center of Mass $CM_2(P_t)$ and the Circumcenter of Mass $CCM(P_t)$ are well defined. \\

Our goal for this section is to show that the limits
$$\lim_{t\rightarrow 0} CM_2(P_t) \hspace{0.3cm}\text{ and } \hspace{0.3cm} \lim_{t\rightarrow 0}CCM(P_t),$$
exist.\\
 
The way the vertices $W_i(t)$ approach to $V_i$ depends on the parity of $n$ and $V_i$ itself:

\begin{itemize}
\item[(a)] \textit{Around ``bending'' vertices:} If $V_i \in \gamma \cap \Gamma$, then $W_{i+1}(t)$ and $W_{i-1}(t)$ approach to $V_{i+1}$, as shown in Figure \ref{deg(a)}: \\
	\begin{itemize}
	\item[$\bullet$] $W_{i}(t)=V_i+\overrightarrow{k_i}t+O(t^2)$,
	\item[$\bullet$] $W_{i+1}(t)= V_{i+1} + \overrightarrow{k}_{i+1}t+O(t^2)$ and
	\item[$\bullet$] $W_{i-1}(t)= V_{i+1} + \lambda_{i-1}\overrightarrow{k}_{i+1}t+O(t^2)$ with $\lambda_{i-1}\neq 1$ (since $W_{i+1}(t)\neq W_{i-1}(t)$ for all $t\neq 0$).
	\end{itemize}
	
	\begin{center}
	\includegraphics[scale=0.42]{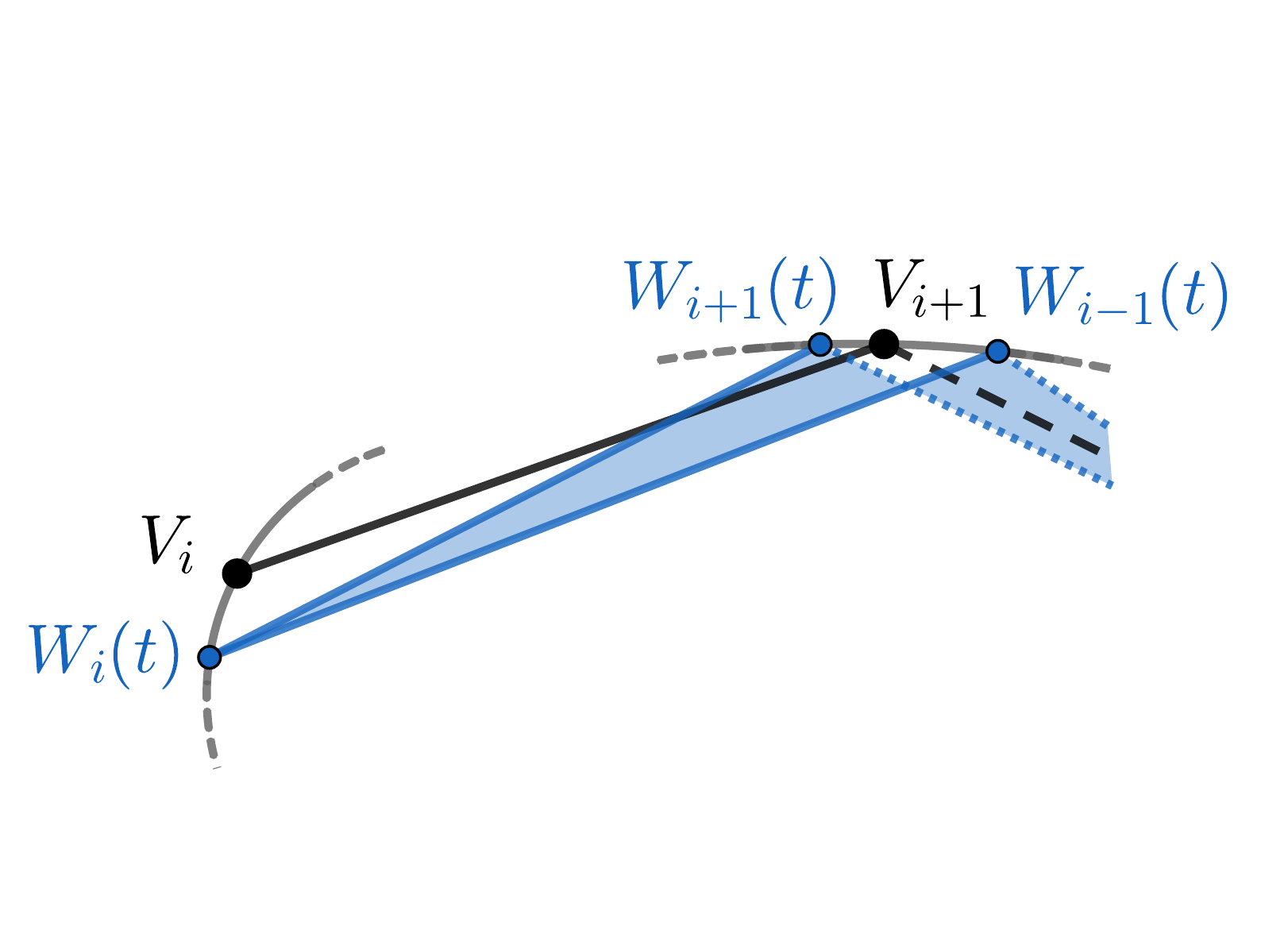}
	\figcaption{Behavior around $V_i\in \gamma \cap \Gamma$.}
	\label{deg(a)}
	\end{center}	 
	 
\item[(b)] \textit{Around ``gluing'' vertices:} If $T_{V_i}\Gamma \in T\gamma$, and $V_{i}=V_{i+1}$ then the vertices $W_{i}(t), W_{i+1}(t)$ are of the form:
	\begin{itemize}
	\item[$\bullet$] $W_i(t)= V_i + \overrightarrow{k_i}t +O(t^2)$ and
	\item[$\bullet$] $W_{i+1}(t)= V_i + \lambda_i\overrightarrow{k_i}t+O(t^2)$ with $\lambda_{i}\neq 1$ (since $W_i(t)\neq W_{i+1}(t)$ for all $t\neq 0$).
	\end{itemize}
	Figure \ref{deg(b)} illustrates the situation.
	
	\begin{center}
	\includegraphics[scale=0.42]{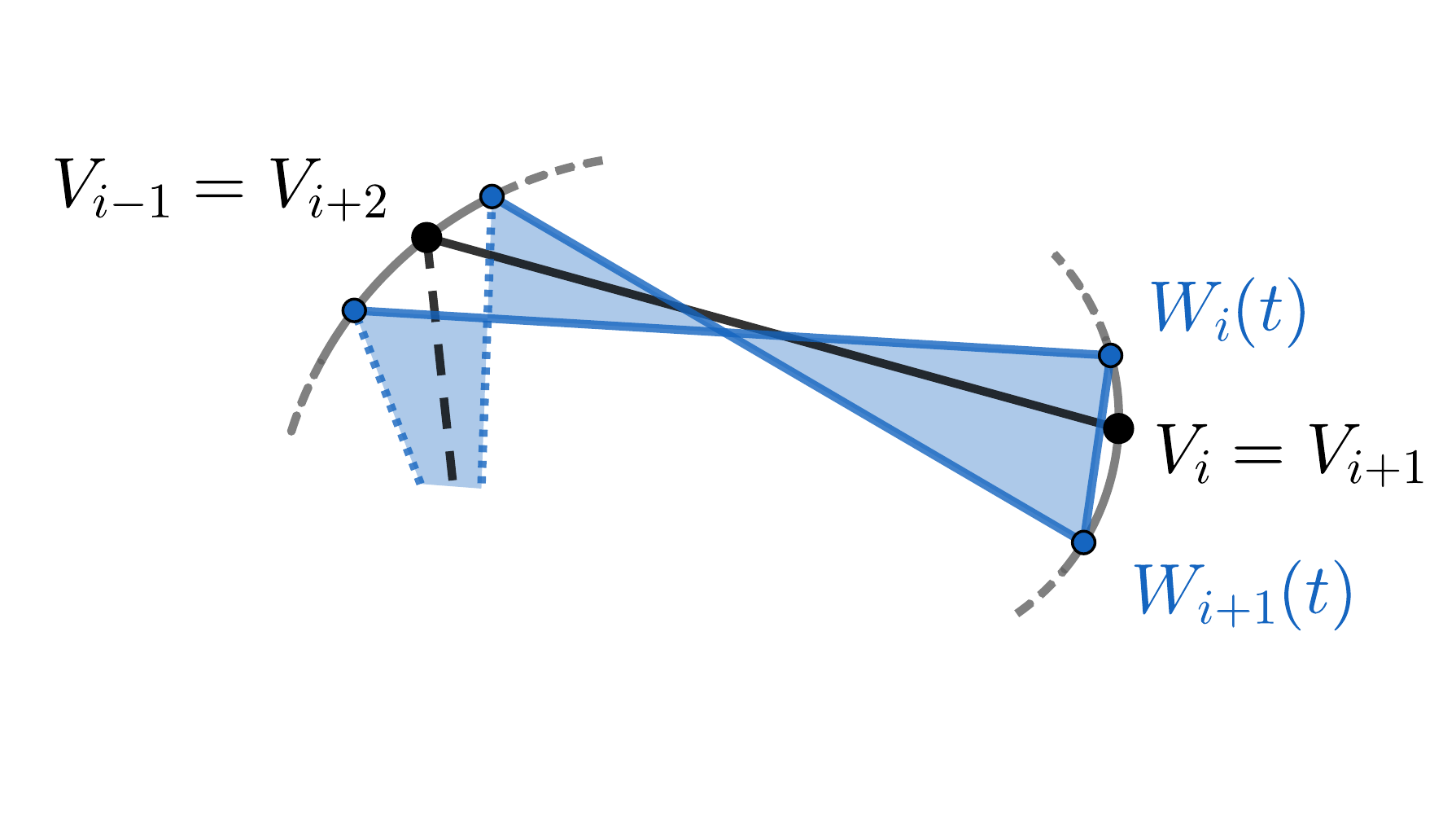}
	\figcaption{Behavior around $V_i$, when $T_{V_i}\Gamma \in T\gamma$.}
	\label{deg(b)}
	\end{center}
	 
\item[(c)] \textit{For the rest of the vertices:} If $V_i=V_j$ for some $i\neq j$ non-consecutive, then $V_{i+1}=V_{j-1}$.\\
Hence:
	\begin{itemize}
	\item[$\bullet$] $W_i(t)= V_i + \overrightarrow{k_i}t +O(t^2)$,
	\item[$\bullet$] $W_j(t)= V_i + \lambda_i\overrightarrow{k_i}t +O(t^2)$ with $\lambda_i\neq 1$, 
	\item[$\bullet$] $W_{i+1}(t)= V_{i+1} + \overrightarrow{k}_{i+1}t +O(t^2)$ and
	\item[$\bullet$] $W_{j-1}(t)= V_{i+1} + \lambda_{i+1}\overrightarrow{k}_{i+1}t +O(t^2)$ with $\lambda_{i+1}\neq 1$.
	\end{itemize}

	\begin{center}
	\includegraphics[scale=0.42]{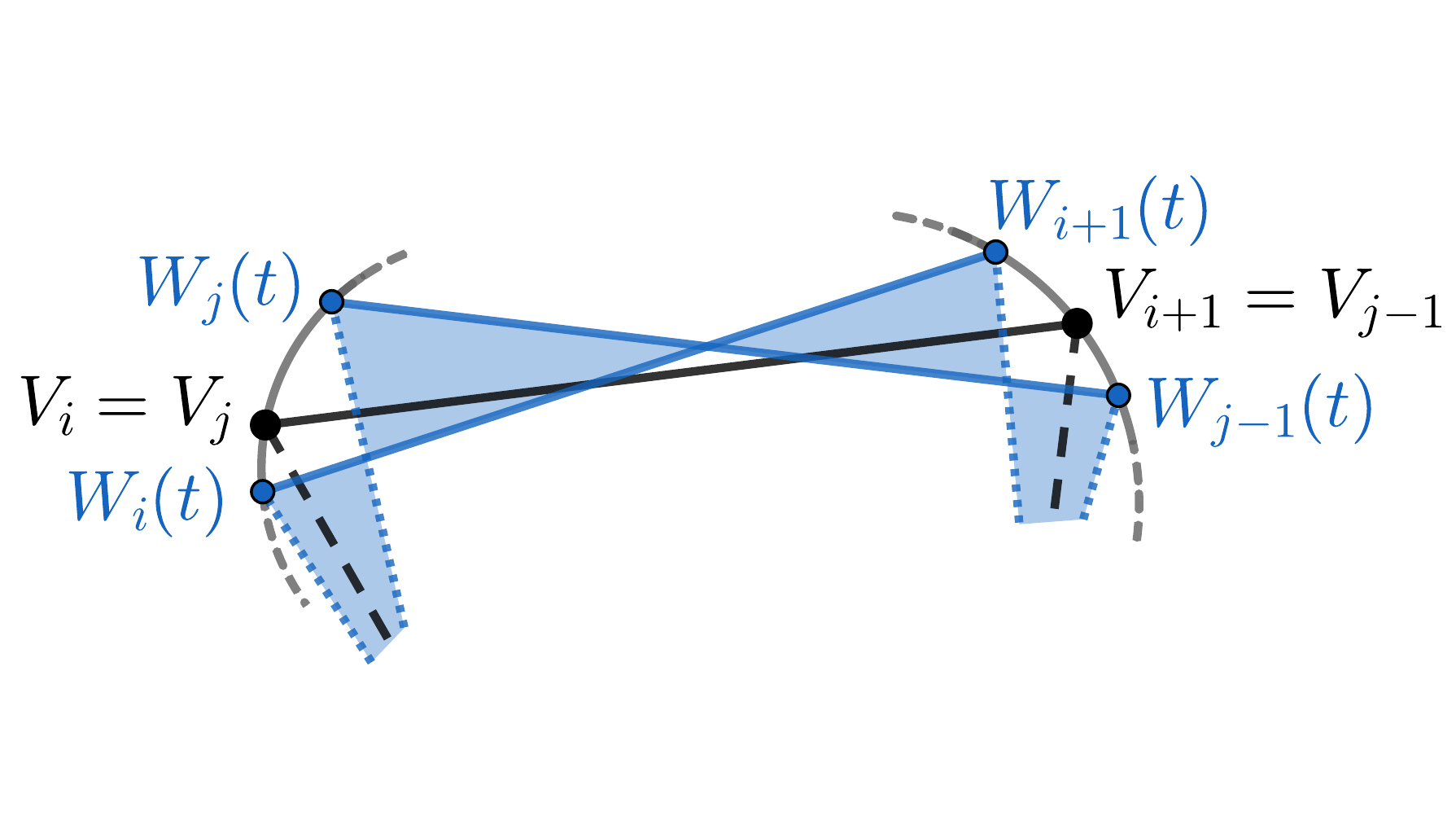}
	\figcaption{Behavior for the rest of the vertices of $P_t$.}
	\label{deg(c)}
	\end{center}

\end{itemize}

\begin{prop} Let $\gamma, \Gamma$ be a pair of conics that admit a $1$-parameter family of Poncelet polygons with area not constant zero. If $P$ is a degenerate polygon with finite vertices, and $P_t$ a $1$-parameter family of non-degenerate Poncelet polygons such that $\lim_{t\rightarrow 0}P_t = P$  then $$\lim_{t\rightarrow 0} CCM(P_t) \hspace{0.3cm}\text{ and } \hspace{0.3cm} \lim_{t\rightarrow 0}CM_2(P_t)$$ exist.
\label{limit}
\end{prop}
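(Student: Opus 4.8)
The plan is to rewrite both centers as a single holomorphic arc in the \emph{compact} space $\mathbb{CP}^2$ and then argue that such an arc always extends over the puncture $t=0$. Using the coordinate formulas, for $t\neq 0$ one has
$$CM_2(P_t) = \big[\,N_x(t) : N_y(t) : 6A(P_t)\,\big], \qquad CCM(P_t) = \big[\,M_x(t) : M_y(t) : 4A(P_t)\,\big],$$
where $N_x,N_y$ (resp. $M_x,M_y$) are the two coordinate sums in the formulas for $CM_2$ (resp. $CCM$), evaluated at the vertices $W_i(t)=(x_i(t),y_i(t))$. Since $P$ has finite vertices and $\lim_{t\to 0}P_t=P$, for small $t$ every $W_i(t)$ stays in a bounded region of $\mathbb{C}^2$, and the expansions (a)--(c) exhibit each $x_i(t),y_i(t)$ as a convergent power series in the complex parameter $t$ (this is exactly where the elliptic-curve parametrisation of the family from the proof of Theorem \ref{jefe} is used). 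Hence $N_x,N_y,M_x,M_y$ and $A(P_t)$ are all \emph{holomorphic} in $t$ near $0$, being fixed polynomials in the $x_i(t),y_i(t)$.

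First I would record that the area is not identically zero along this arc. Every $P_t$ with $t\neq 0$ is a non-degenerate Poncelet polygon, so by Theorem \ref{jefe}, together with the standing hypothesis that the family is not entirely of zero area, we must have $A(P_t)\neq 0$ for all small $t\neq 0$. Thus $A(P_t)\not\equiv 0$; being holomorphic, it vanishes at $t=0$ to some finite order $m_A\geq 1$ (it vanishes because $A(P)=0$ by Lemma \ref{zero-degenerate}).

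The core step is then the general fact that a triple of holomorphic functions, not all identically zero, defines a map to $\mathbb{CP}^2$ that extends continuously over $t=0$. Concretely, set $m=\min\{\operatorname{ord}_0 N_x,\ \operatorname{ord}_0 N_y,\ \operatorname{ord}_0 A(P_t)\}$, which is finite because $m\leq m_A<\infty$. Dividing the three homogeneous coordinates by $t^m$ produces holomorphic functions that do not all vanish at $t=0$, so their values there form a nonzero vector and
$$\lim_{t\to 0} CM_2(P_t)=\big[\,(t^{-m}N_x)(0)\ :\ (t^{-m}N_y)(0)\ :\ 6\,(t^{-m}A(P_t))(0)\,\big]$$
is a well-defined point of $\mathbb{CP}^2$, where $(t^{-m}N_x)(0)$ denotes the value at $0$ of the holomorphic extension of $t^{-m}N_x(t)$. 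The identical argument with the triple $(M_x,M_y,4A(P_t))$ handles $CCM(P_t)$.

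The main obstacle, and the reason the statement is not vacuous, is that since $A(P_t)\to 0$ the limit generally escapes to the line at infinity, exactly as in the quadrilateral case of the preceding Proposition, so the limit must be taken in $\mathbb{CP}^2$ rather than in $\mathbb{C}^2$: one cannot simply substitute $t=0$ into the affine formulas. The delicate points are therefore (i) justifying that $t$ can be chosen so that all finite vertices depend holomorphically on it, which is precisely what the expansions (a)--(c) encode, and (ii) controlling the vanishing orders at $0$. If one wants the explicit location of the limit rather than mere existence, I would substitute the first-order data $W_i(t)=V_i+\overrightarrow{k_i}\,t+O(t^2)$ into $N_x,N_y,A(P_t)$, verify that the area vanishes exactly to first order (so $m_A=1$), and read off the leading coefficients; the conditions $\lambda_i\neq 1$ appearing in (a)--(c) are exactly what keep these leading terms from degenerating.
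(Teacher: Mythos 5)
Your argument is internally consistent, but it proves a strictly weaker statement than the one the proposition is making, and the paragraph where you identify the ``main obstacle'' shows the misreading. Projectivizing the formulas and invoking holomorphy of the triple $(N_x, N_y, 6A)$ only yields convergence in $\mathbb{CP}^2$, which is essentially automatic for any holomorphic arc; the real content of Proposition \ref{limit} --- and of the gap in \cite{centers-poncelet} that it is meant to close --- is that the limits are \emph{finite}, i.e.\ that $CCM(P_t)$ and $CM_2(P_t)$ do \emph{not} escape to the line at infinity as the polygon degenerates. This finiteness is exactly what Section 4 relies on: there, the coordinates of $CCM$ on ${\cal E}$ are counted as having poles \emph{only} at the $4n$ flags whose polygons have a vertex on ${\cal L}_\infty$, hence no poles at degenerate polygons. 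If the limit could lie at infinity, as you assert ``generally'' happens, those coordinates would acquire extra poles and the locus theorem would collapse. Your appeal to the quadrilateral proposition is also misplaced: that proposition concerns families of \emph{constant} zero area, which are excluded here by hypothesis, and it describes the locus for all $t$, not a limit of well-defined centers.

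To prove finiteness you must compare orders of vanishing at $t=0$, which is precisely the step you defer to an optional closing remark. Two routes work. The paper's route is to triangulate $P_t$ so that each triangle has exactly two vertices collapsing to a single vertex of $P$, compute $A(T(t)) = (\lambda-1)\langle V_i - V_{i\pm 1}, \overrightarrow{k}_i^{\perp}\rangle\, t + O(t^2)$ with nonvanishing leading coefficient, observe that the circumcenters stay finite (all vertices lie on the conic $\Gamma$, so no angle of any triangle tends to $\pi$) and that the centroids converge, and conclude that the weights $A_i(t)/A(P_t)$ and hence the weighted sums converge to finite points. Alternatively, staying within your homogeneous setup, you would need two facts: (i) $A(P_t)$ has a \emph{simple} zero at $t=0$, which follows from the elliptic-function bookkeeping in the proof of Theorem \ref{jefe} (the area function has order $4n$, and by Lemma \ref{zero-degenerate} the $4n$ degenerate polygons already account for all its zeros, so each is simple); and (ii) the numerators $N_x, N_y, M_x, M_y$ all vanish at $t=0$, which holds because in a degenerate polygon the relations $V_{i-k}=V_{i+k}$ make the terms of these sums cancel in pairs (each edge is traversed twice with opposite orientations). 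Given (i) and (ii), the ratios $N_x/6A$, $N_y/6A$, $M_x/4A$, $M_y/4A$ have removable singularities at $t=0$ and the limits are finite points of $\mathbb{C}^2$. Without some version of this comparison, your proof does not establish the proposition in the form the rest of the paper uses it.
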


\begin{proof}
Regardless of the parity of $n$, we can triangulate the polygons $P_t$ into triangles $(W_{j_1}(t), W_{j_2}(t), W_{j_3}(t))$ such that 
$$\lim_{t\rightarrow 0} W_{j}(t)= \lim_{t\rightarrow 0} W_{j'}(t)=V_i$$ 
for exactly two indices $j, j' \in \{j_1, j_2, j_3\}$ and for some vertex $V_i$ of $P$.\\

\begin{itemize}
\item[(a)] If $V_i\in \gamma \cap \Gamma$, take $(W_{i-1}(t), W_i(t), W_{i+1}(t))$ as part of the triangulation. This is shown in Figure \ref{triangulation} (left).\\
\item[(b)] If $V_i=V_j$ for some $i\leq j$, add the triangles $(W_i(t), W_{i+1}(t), W_j(t))$ and $(W_{j-1}(t), W_j(t), W_{i+1}(t))$ to the triangulation. This is illustrated in Figure \ref{triangulation} (right).\\
\end{itemize}

\begin{figure}
\centering
\includegraphics[scale=0.35]{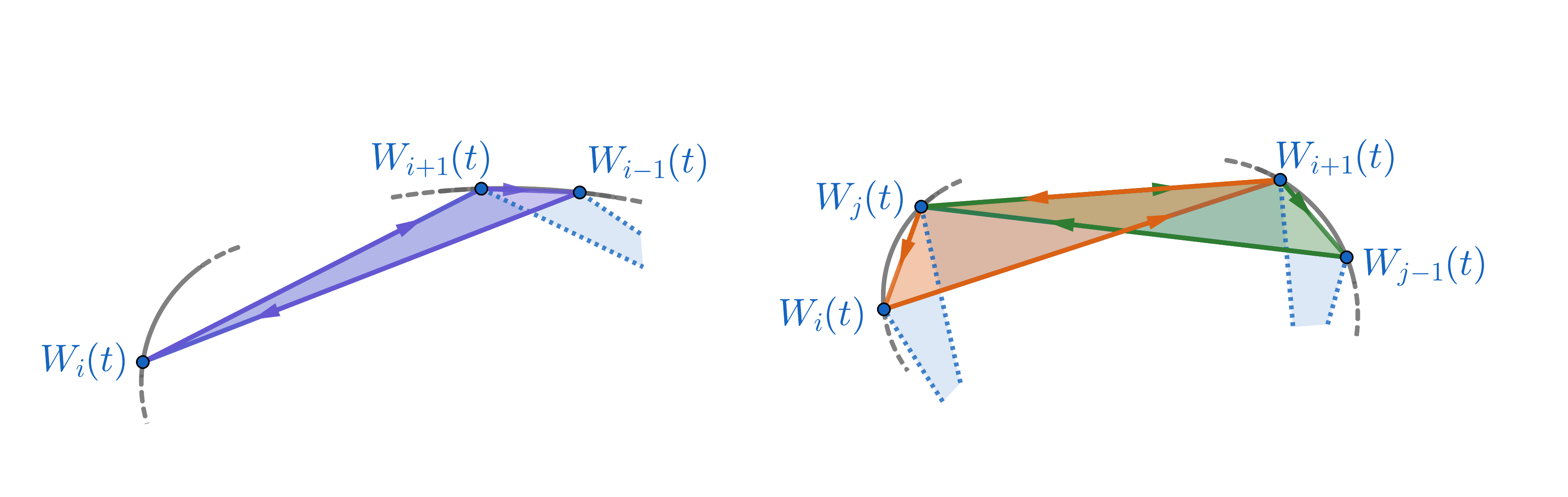}
\caption{Triangulation of $P_t$. The figure in the left shows the triangulation around vertices in the intersection $\gamma \cap \Gamma$. The figure in the right shows the triangulation for the rest of the vertices.}
\label{triangulation}
\end{figure}

\clearpage

\underline{\textit{Claim:}}
\begin{enumerate}
	\item The area of each triangle tends to zero linearly with respect to $t$. 
	\item The limit of the circumcenter of each triangle exists.
	\item The limit of the centroid of each triangle exists.
\end{enumerate}

Before proving the claim, notice that if it holds, then we are done. Consider the triangulation $\{T_1(t), \ldots T_{n-2}(t)\}$ of $P_t$ described above. If the area of each triangle $A_i(t)$ tends to zero linearly with respect to $t$ and the area of $P_t$ is not zero, then the area $A(P_t)$ also tends to zero linearly with respect to $t$, and so $\lim_{t\rightarrow 0} \frac{A_i(t)}{A(P_t)}$ exists.\\
Denote by $C_i(t)$ the circumcenter of $T_i(t)$. Then the second part of the claim secures that $\lim_{t\rightarrow 0} C_i(t)$ exists for each subindex $i$. Hence, 
$$\lim_{t\rightarrow 0} \sum_{i=1}^{n-2} \frac{A_i(t)}{A(P_t)} C_i(t)$$
exists.\\

Similarly, if $G_i(t)$ denotes the centroid of $T_i(t)$, then the third part of the claim ensures that $\lim_{t\rightarrow 0}G_i(t)$ exists for all $i$. Thus,
$$\lim_{t\rightarrow 0} \sum_{i=1}^{n-2} \frac{A_i(t)}{A(P_t)} G_i(t)$$
also exists.
\\

\textit{Proof of the claim:}\\
Consider one triangle $T(t)=(W_{j_1}(t), W_{j_2}(t), W_{j_3}(t))$ of this triangulation. Without lost of generality, suppose $$\lim_{t\rightarrow 0} W_{j_1}(t)=\lim_{t\rightarrow 0} W_{j_2}(t) = V_i.$$
That is:

\begin{itemize}
	\item $W_{j_1}(t)=V_i+ \overrightarrow{k_i}t+O(t^2),$
	\item $W_{j_2}(t)=V_i+ \lambda \overrightarrow{k_i}t+O(t^2),$
	\item $W_{j_3}(t)=V_{i\pm 1}+ \overrightarrow{k}_{i\pm 1}t+O(t^2).$
\end{itemize}

where $\lambda \neq 1$, and $\overrightarrow{k_i}$ is a vector tangent to $\Gamma$ at $V_i$, as shown in Figure \ref{local}: 

\begin{center}
\includegraphics[scale=0.35]{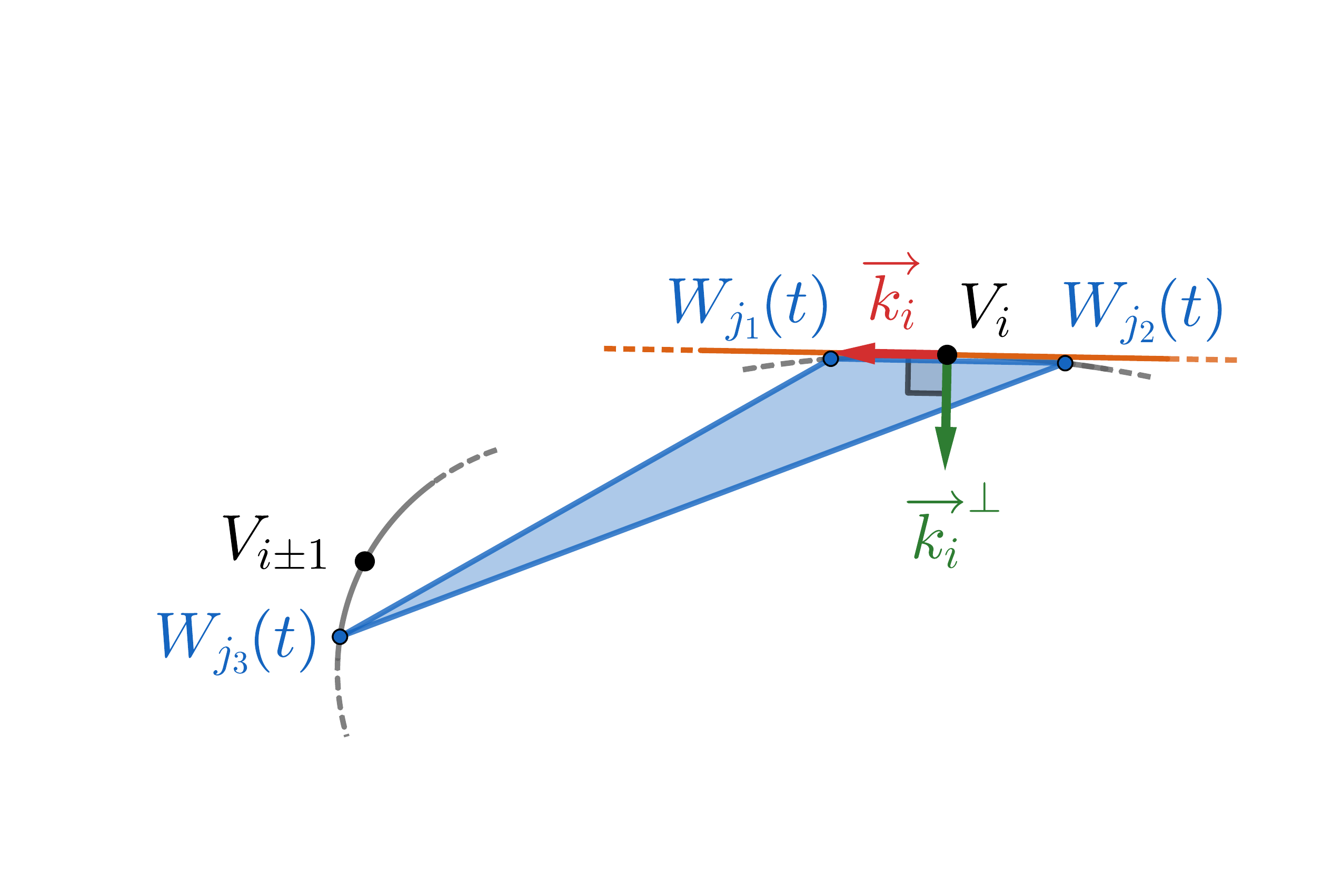}
\figcaption{The triangle $(W_{j_1}(t), W_{j_2}(t), W_{j_3}(t))$ is part of the triangulation. Notice the vector $V_i-V_{i\pm 1}\neq 0$ is not parallel to $T_{V_i}\Gamma$.}
\label{local}
\end{center}

Given $V=(x,y) \in \mathbb{C}^2$, let $V^\perp:=(-y, x)$. If $\langle , \rangle$ denotes the usual dot product, then the area of $T(t)$ is: \\
$$A(T(t))= (\lambda-1)\langle V_i-V_{i\pm 1}, \overrightarrow{k}_i^\perp\rangle t+O(t^2).$$

Recall $\lambda \neq 1$, and $\langle W, \overrightarrow{k_i}^\perp \rangle =0$ if and only if $W \in T_{V_i}\Gamma$, which is not the case for $V_i-V_{i\pm 1}$.\\

For the second part, recall that the vertices of each triangle $T_i(t)$ all lie on $\Gamma$, which is a conic. Therefore none of the angles of any $T_i$ tend to $\pi$, and hence the circumcenter $C_i(t)$ of each triangle in the limit is also finite.\\

Finally, for the third part we can even give precise coordinates of the centroid of a degenerate triangle $T_i=(V_i, V_i, V_{i\pm 1})$: $$CM_2(T_i)=\frac{2V_i +V_{i\pm 1}}{3}.$$
This is because the centroid of a non-degenerate triangle is obtained by the intersection of the medians. Recall the centroid divides every median in ratio $2:1$. This provides the formula given above.
\end{proof}


\section{Locus of the Circumcenter of Mass for Poncelet Polygons}

\begin{thm}
Let $\gamma, \Gamma$ be a pair of conics in general position that admit a $1$-parameter family of Poncelet $n$-gons $P_t$, with not-constant zero area. Then the locus $CCM(P_t)$ is also a conic.
\end{thm}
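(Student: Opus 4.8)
The plan is to exploit the same elliptic-curve framework from the proof of Theorem \ref{jefe}. As established there, the flag space ${\cal E}=\{(p,\ell): p\in\Gamma,\ p\in\ell,\ \ell\in T\gamma\}$ is an elliptic curve (topologically a torus), and the Poncelet map $T=\tau\circ\sigma$ becomes translation by a constant $c$ in a suitable parameter $t$. Each flag determines an oriented Poncelet $n$-gon, so the two coordinate functions of $CCM(P_t)$ pull back to functions on ${\cal E}$. The core of the argument is to show that these coordinate functions are elliptic (meromorphic and $Z_n$-invariant), and then to characterize the locus as a conic by comparing the divisors of the relevant elliptic functions.

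First I would write $CCM(P_t)=(X(t),Y(t))$ using the explicit vertex formula from the introduction. Since each vertex $V_j=(x_j,y_j)$ depends meromorphically on $t\in{\cal E}$ (each is a rational function of the flag data via stereographic coordinates on $\Gamma$), both $X(t)$ and $Y(t)$ are meromorphic on ${\cal E}$. The cyclic relabeling $V_j\mapsto V_{j+1}$ corresponds to the $Z_n$-action, and because $CCM$ is independent of the starting vertex and of the triangulation, $X$ and $Y$ are $Z_n$-invariant; hence they are elliptic functions on ${\cal E}$. The key structural point, analogous to the area computation, is to control their poles: a coordinate of $CCM$ can blow up only when some vertex $V_j$ escapes to ${\cal L}_\infty$ or when $A(P_t)\to 0$; the latter does not occur generically by the standing hypothesis, so the poles come from the $2n$ flags over the points of $\Gamma\cap{\cal L}_\infty$, giving $X$ and $Y$ bounded (and equal) pole orders.

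The heart of the matter is to upgrade ``$X$ and $Y$ are elliptic'' to ``$(X,Y)$ traces a conic.'' The clean way is to observe that the image of an elliptic curve under a pair of elliptic functions lies on an algebraic curve, and to pin down its degree. I would argue that $1$, $X$, $Y$, $X^2$, $XY$, $Y^2$ are six elliptic functions whose pole divisors are all supported on the same finite set over $\Gamma\cap{\cal L}_\infty$; once I bound the total pole order, the Riemann--Roch dimension of the space of elliptic functions with poles in that divisor is strictly less than six, forcing a nontrivial linear relation $aX^2+bXY+cY^2+dX+eY+f=0$. That relation is exactly the equation of a conic containing the locus. To rule out degeneracy (a line, point, or the whole plane), I would invoke, as Theorem 2 does, the family's non-degenerate members and a concrete computation or symmetry normalization (e.g. reducing to concentric conics) to verify the conic is genuine.

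The main obstacle I anticipate is the bookkeeping of pole orders: I must verify that as a single vertex tends to infinity the $CCM$ coordinates have poles of controlled order (I expect order two, since the circumcenter formula is quadratic in the vertices while the area in the denominator is linear in the offending coordinate), and that these orders are uniform across the $2n$ points of $\Gamma\cap{\cal L}_\infty$. Getting the divisor degree right is what makes the Riemann--Roch dimension count land at ``less than six,'' and hence forces precisely a conic rather than a higher-degree curve; an overly pessimistic pole bound would only yield a curve of high degree. Liouville's theorem, cited in the introduction alongside \cite{G-H}, is the tool that converts the absence of poles (after clearing denominators in any purported relation) into the desired algebraic constraint, so I would lean on it to close the argument.
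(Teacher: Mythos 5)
Your overall strategy---realize the coordinates of $CCM$ as invariant meromorphic functions on ${\cal E}$, bound their poles at the flags over $\Gamma\cap{\cal L}_\infty$, and force a quadratic relation by a dimension count closed off by Liouville's theorem---is exactly the paper's strategy, but two quantitative points in your execution are wrong, and each one by itself makes the count fail. First, invariance under $Z_n$ alone is not enough. On a genus-one curve the space of meromorphic functions with poles bounded by a divisor $2D$ has dimension $\deg(2D)$; with your pole set this is at least $8$ (it is $8n$ on ${\cal E}$ itself, and still $8$ after passing to the elliptic curve ${\cal E}/Z_n$, whose pole divisor consists of four points), so it is never ``strictly less than six'' and no linear relation among $1, X, Y, X^2, XY, Y^2$ is forced. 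What rescues the count is that $X$ and $Y$ are invariant under the full dihedral group $D_n$ ($CCM$ is unchanged under reversal of orientation as well as cyclic relabeling), which is what the paper uses: the $4n$ poles form only \emph{two} $D_n$-orbits, one over each point $L,M\in\Gamma\cap{\cal L}_\infty$, so killing the principal parts at $L$ and at $M$---four linear conditions---kills all poles, and four homogeneous conditions on the five coefficients of $Ax^2+By^2+Cxy+Dx+Ey$ always admit a nontrivial solution. Equivalently, the Riemann--Roch count only lands below six on the genus-zero quotient ${\cal E}/D_n\cong\mathbb{CP}^1$, where $\dim L(2p_1+2p_2)=5<6$.

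Second, your expected pole order is wrong, in a direction that breaks even the corrected count: the poles of $X$ and $Y$ are simple, not of order two. When a vertex tends to $L$ with local parameter $z$, the numerator of the $CCM$ formula blows up like $1/z^{2}$ (through the terms $x_i^2+y_i^2$ of the escaping vertex weighted by its finite neighbors), while the area in the denominator blows up like $1/z$, so the quotient has a pole of order one; the paper identifies the leading coefficients $[u_1:u_2:0]$ geometrically as the point at infinity of the perpendicular bisector of $L_-L_+$, which is what feeds its linear system. If the poles were genuinely of order two, the six functions would lie in a space of dimension $9$ even on the genus-zero quotient, and nothing would be forced. Finally, your dismissal of the case $A(P_t)\to 0$ as ``not occurring generically'' misses the point: the area does vanish at the $4n$ degenerate polygons of Lemma \ref{zero-degenerate}, which are honest points of ${\cal E}$, and to assert that the poles of $X,Y$ sit only over $L$ and $M$ you must show the $CCM$ coordinates stay finite at those points. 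That is precisely the content of Proposition \ref{limit}, which your argument needs but never invokes.
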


\begin{proof}
Recall that $\Gamma$ intersects the line at infinity in two different points, say $L$ and $M$. \\

Observe that the $x$ and $y$ coordinates of $CCM$ on ${\cal E}$ are meromorphic $D_n$-invariant functions, with $4n$ simple poles: $2n=|D_n|$ of them coming from each one of the flags that give rise to the polygon with a vertex in $L$, and $2n$ more coming from the polygon with one of the vertices in $M$.\\

Say $P$ is a Poncelet polygon with a vertex at infinity. Denote by $L_+, L_-$ and $M_+, M_-$ the adjacent vertices to $L$ and $M$ respectively. Then $CCM(P)$ coincides with the circumcenter of $(L_{-}, L, L_{+})$ (in case $L$ is a vertex of $P$) or the circumcenter of $(M_-, M, M_+)$ (if $M$ is a vertex of $P$), since this is the term that overpowers in formula (\ref{CCM}). The circumcenter of $(L_-,L,L_+)$ coincides the intersection of the perpendicular bisector of $L_{-}L_{+}$ at infinity, and similarly for the circumcenter of $(M_-, M, M_+)$.\\

Let $z$ be a local holomorphic parameter of ${\cal E}$ at $L$. Then

$$x(z)=\frac{u_1}{z}+u_3+\ldots , \hspace{1.5cm} y(z)=\frac{u_2}{z}+u_4+\ldots ,$$

where $[u_1: u_2:0]$ is the intersection of the perpendicular bisector of $L_-L_+$ at infinity.\\

Similarly, if $w$ is a local holomorphic parameter of ${\cal E}$ at $M$, then

$$x(w)=\frac{v_1}{w}+v_3+\ldots , \hspace{1.5cm} y(w)=\frac{v_2}{w}+v_4+\ldots ,$$

where $[v_1: v_2:0]$ is the intersection of the perpendicular bisector of $M_-M_+$ at infinity.\\

Consider $f(x,y)=Ax^2+By^2+Cxy+Dx+Ey$. We want to find $A,B,C,D,E \in \mathbb{C}$ such that $f$ has no poles at $L$ and $M$.\\

Expanding $f$ in the local parameters $z$ and $w$ for $L$ and $M$:

\begin{align*}
f(x(z), y(z)) =& \frac{1}{z^2}\left(u_1^2A + u_2^2B + u_1u_2C \right) + \\ 
               & \frac{1}{z}[2u_1u_3A + 2u_2u_4B + (u_1u_4+u_3u_2)C + u_1D + u_2E] + \ldots
\end{align*}

\begin{align*}
f(x(w), y(w)) =& \frac{1}{w^2}\left(v_1^2A + v_2^2B + v_1v_2C \right) +\\
               & \frac{1}{w}[2v_1v_3A + 2v_2v_4B + (v_1v_4+v_3v_2)C+ v_1D + v_2E] + \ldots
\end{align*}

So we are looking for $A, B, C, D, E$ non-trivial solution to the system of linear equations

\begin{equation}
\begin{cases}
u_1^2A + u_2^2B + u_1u_2C = 0.\\
2u_1u_3A + 2u_2u_4B + (u_1u_4+u_3u_2)C + u_1D + u_2E = 0.\\
v_1^2A + v_2^2B + v_1v_2C = 0.\\
2v_1v_3A + 2v_2v_4B + (v_1v_4+v_3v_2)C+ v_1D + v_2E = 0.
\end{cases}
\label{System}
\end{equation}

Recall $u_i, v_i$ are constants that depend on $\gamma$, $\Gamma$ and the relative position of $\gamma$ with respect to $\Gamma$.  \\

Since we have more variables than equations, the system is consistent and the solution set is at least $1$-dimensional.\\

If $A,B, C, D, E$ is a non-trivial solution of (\ref{System}), then $$f(x,y)=Ax^2+By^2+Cxy+Dx+Ey$$ has no poles on ${\cal E}$, meaning this function is constant, and so the coordinates $x(t),y(t)$ of $CCM(P_t)$ satisfy
$$Ax(t)^2+By(t)^2+Cx(t)y(t)+Dx(t)+Ey(t)=F,$$
for some $F \in \mathbb{C}$. Hence, the locus of $CCM(P_t)$ is a conic.\\
\end{proof}


\section{Area invariants of Poncelet polygons}

Through numerical experiments, Dan Reznik has found several invariants related to billiard trajectories. There is one of them with its proof:

\begin{thm}
Let $\gamma$, $\Gamma$ be two concentric ellipses in general position admitting a $1$-parameter family of Poncelet $n$-gons. Given $P=(V_1, \ldots, V_n)$ one of these Poncelet $n$-gons, let $Q$ be a new polygon formed by the tangent lines to $\Gamma$ at $V_i$ (see Figure \ref{invariant-areas}).
If $n$ is even, then $A(P)\cdot A(Q)$ stays constant within the Poncelet family.
\end{thm}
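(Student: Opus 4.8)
The plan is to run the Griffiths--Harris machinery of Theorem \ref{jefe} on both polygons simultaneously: realize $A(P_t)$ and $A(Q_t)$ as meromorphic functions on the elliptic curve $\mathcal{E}$ of flags and prove that their product is a \emph{holomorphic} elliptic function, hence constant. A cyclic relabelling of the $V_i$ relabels the tangent lines $T_{V_i}\Gamma$, so both $A(P_t)$ and $A(Q_t)$ are $\mathbb{Z}_n$-invariant (in fact $D_n$-invariant) meromorphic functions on $\mathcal{E}$ — the vertices of $Q_t$ are rational in the $V_i$, which are themselves meromorphic on $\mathcal{E}$ — and so is the product $A(P_t)\cdot A(Q_t)$. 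Since a holomorphic function on the torus $\mathcal{E}$ is constant, it suffices to show that every pole of one factor is annihilated by a zero of the other. The essential extra input, valid precisely because $\gamma,\Gamma$ are concentric and $n$ is even, is the central symmetry of the family: the Poncelet map commutes with $x\mapsto -x$ (the argument sketched for quadrilaterals), and for even $n$ this forces $V_{i+n/2}=-V_i$ for every polygon of the family.

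First I would locate the poles of each factor. As in Theorem \ref{jefe}, $A(P_t)$ has poles exactly at the flags for which some vertex $V_i$ lies on $\Gamma\cap\mathcal{L}_\infty=\{L,M\}$; by the central symmetry, if $V_i=L$ then $V_{i+n/2}=-V_i=L$ as well, so two opposite vertices run off to the \emph{same} point at infinity simultaneously. On the other side, $A(Q_t)$ acquires a pole exactly when one of the vertices $T_{V_i}\Gamma\cap T_{V_{i+1}}\Gamma$ of $Q_t$ escapes to infinity, i.e. when two consecutive tangent lines become parallel; for a central conic this happens if and only if $V_{i+1}=-V_i$, equivalently $V_{i+1}=V_{i+n/2}$.

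Next I would match poles to zeros. At a pole of $A(P_t)$ the two opposite vertices sit at the same infinite point, so the corresponding sides $T_{V_i}\Gamma$ and $T_{V_{i+n/2}}\Gamma$ of $Q_t$ both degenerate to $T_L\Gamma$ and coincide; two coincident sides force $Q_t$ to collapse, so $A(Q_t)\to 0$. Conversely, at a pole of $A(Q_t)$ the relation $V_{i+1}=V_{i+n/2}$ says two vertices of $P_t$ collide, so $P_t$ is a degenerate Poncelet polygon and $A(P_t)=0$ by Lemma \ref{zero-degenerate}. Thus the two factors share no pole and each pole meets a zero of the other. This dichotomy is the self-dual shadow of the fact that $Q_t$ is itself a Poncelet polygon for the dual pair $(\Gamma^*,\gamma^*)$, which lets me organize the two directions symmetrically and reuse the same area-vanishing analysis for $Q_t$ that Lemma \ref{zero-degenerate} provides for $P_t$.

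The step I expect to be the \textbf{main obstacle} is the bookkeeping of \emph{orders}: to conclude holomorphy I must verify that the surviving factor vanishes to order at least that of the pole it is cancelling. Central symmetry makes several degeneracies coincide at each special flag — at an infinite-vertex configuration $Q_t$ already has a pair of coincident sides, and at a $V_{i+1}=-V_i$ configuration $P_t$ collides while a second vertex of $Q_t$ simultaneously runs to infinity — so a crude set-level matching is not sufficient. I would settle this with the local Laurent analysis already used in Proposition \ref{limit}, writing $W_i(t)=V_i+\vec{k}_i\,t+O(t^2)$ near each degeneration and extracting the leading order of the area, together with the global constraint that both $A(P_t)$ and $A(Q_t)$ are elliptic of order $4n$, which pins their zero and pole divisors against one another. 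Once the divisor of $A(P_t)\cdot A(Q_t)$ is shown to be trivial, holomorphy on $\mathcal{E}$ forces the product to be constant, completing the proof.
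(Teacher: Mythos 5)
Your proposal follows essentially the same route as the paper's proof: realize $A(P_t)\cdot A(Q_t)$ as a meromorphic function on the elliptic curve ${\cal E}$, identify the poles of each factor (a vertex of $P_t$ escaping to infinity, respectively two consecutive tangents to $\Gamma$ becoming parallel, i.e.\ $V_{i+1}=-V_i$), use the central symmetry $V_{i+n/2}=-V_i$ of even-sided concentric Poncelet polygons to show that each pole of one factor occurs at a degenerate configuration of the other and is therefore cancelled by a zero, and conclude by Liouville. The one factual slip, which does not affect the argument, is the claim that both factors are elliptic of order $4n$: in the concentric even-$n$ setting they have order $2n$, because the central symmetry forces infinite vertices to come in pairs ($V_i=L$ implies $V_{i+n/2}=L$), which halves the number of pole flags while the poles remain simple, the two leading contributions being equal and adding rather than cancelling.
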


\begin{figure}
\centering
\includegraphics[scale=0.32]{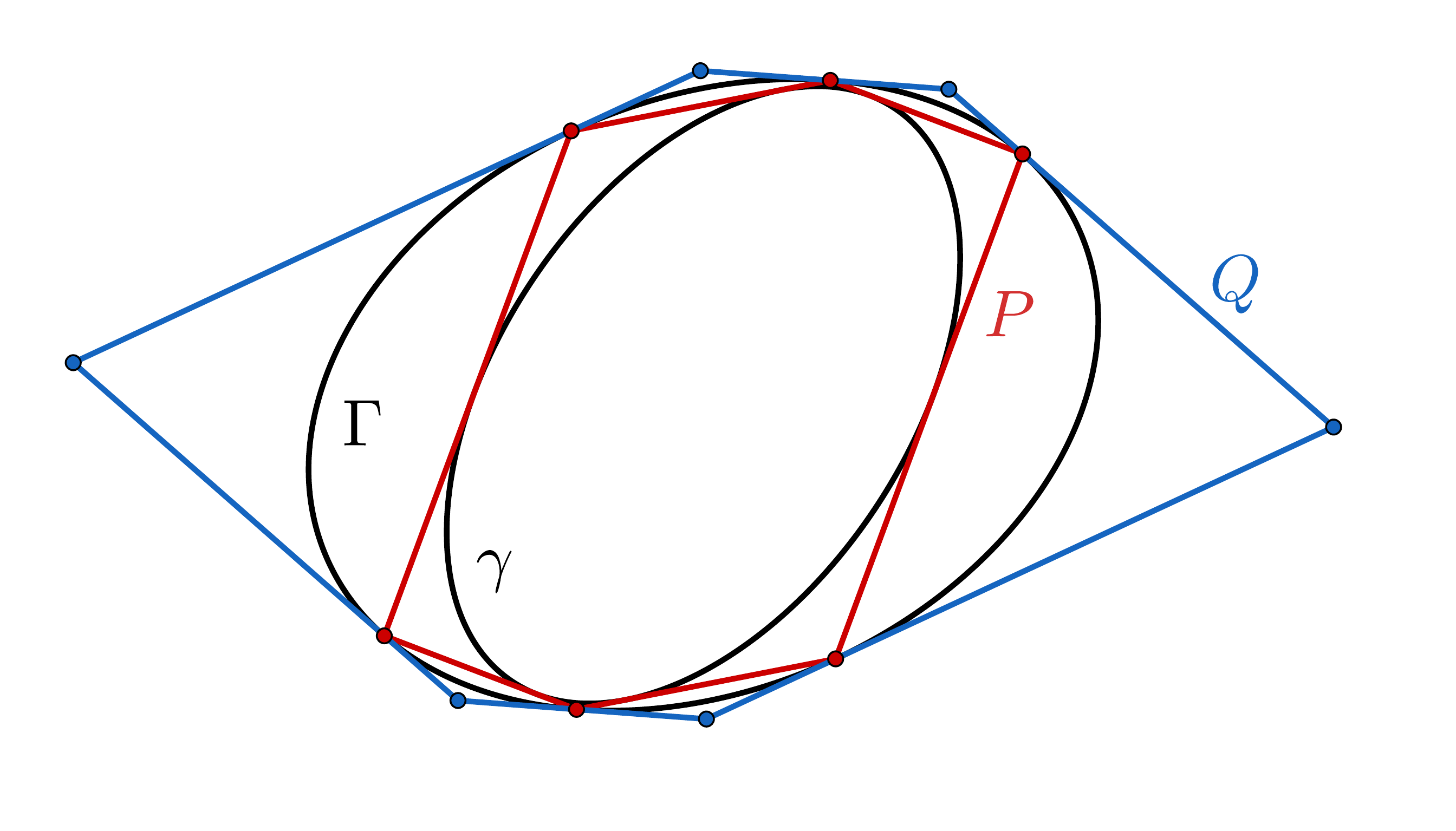}
\caption{The ellipses $\gamma$, $\Gamma$ admit in this case a $1$-parameter family of Poncelet hexagons. $P$ has vertices in $\Gamma$ and sides tangent to $\gamma$, while $Q$ has sides tangent to $\Gamma$.}
\label{invariant-areas}
\end{figure}

\begin{proof}
By applying an equiaffine transformation, we can assume $\gamma$ is a circle, with both $\gamma$ and $\Gamma$ centered at the origin. As before, the product of the areas can be thought of as a meromorphic function $g:{\cal E} \rightarrow \hat{\mathbb{C}}$. It would be enough to check that $g$ has no poles to conclude that $g$ must be constant.\\

Notice $g$ may have a pole if one of the polygons $P$ or $Q$ has a vertex at infinity. \\

If $Q$ had a vertex at infinity, this would imply that $T_{V_i}\Gamma$ and $T_{V_{i+1}}\Gamma$ are parallel. This happens only when $-V_{i}=V_{i+1}$. Since $n$ is even, and the Poncelet map commutes with the reflection in the origin, one has that $-V_i=V_{i+\frac{n}{2}}$. So, $V_{i+1}=V_{i+\frac{n}{2}}$: this is possible only when $n=4$, $V_1=V_4$, $V_2=V_3$. Notice that $V_i \in \gamma\cap \Gamma$  for all $i$, thus $P$ is a degenerate polygon, with finite vertices. In this case, $A$ has a simple pole at $Q$, but a simple zero at $P$. Then $g$ has no pole in this case.\\

If $Q$ has only finite vertices and $P$ has a point at infinity (say $V_n$), then the lines connecting $V_n$ with $V_1$ and $V_n$ with $V_{n-1}$ are parallel and by symmetry with respect to the reflection in the origin, $V_1=-V_{n-1}$. \\

By symmetry with respect to the reflection in the origin, one also has that $V_{i}=-V_{n-i}$, for all $i$, giving a pairing between finite vertices. Since $n$ is even, the only possibility is that $P$ is a degenerate polygon, with $V_{n/2}=V_n$ and
	\begin{itemize}
	\item $V_{n/4}, V_{3n/4} \in \gamma \cap \Gamma$ if $n\equiv 0 (\text{mod }4)$.
	\item $V_{\frac{n-2}{4}}=V_{\frac{n+2}{4}}, V_{\frac{3n+2}{4}}=V_{\frac{3n-2}{4}}$ and hence their tangent to $\Gamma$ is also tangent to $\gamma$, if $n\equiv 2 (\text{mod }4)$.
	\end{itemize}
In this case, $A$ has a simple pole at $P$, but because $P$ is degenerate, $Q$ is also degenerate (with finite vertices) and so, $A$ has a simple zero at $Q$. Hence $g$ has no pole at $P$ and therefore is constant.
\end{proof}


\section{Moving to other geometries}

The centers $CM_0$, $CM_2$ and $CCM$ still make sense in different geometries (for instance, \cite{CCM} goes over the definition and properties of $CCM$ in spherical and hyperbolic geometry). One may feel tempted to look into spherical and hyperbolic geometry and see if any of these theorems still hold. Unfortunately, the scenario doesn't look very promising.\\

For instance, consider $\mathbb{S}^2$. As described in \cite{Izmestiev-conics}, a spherical conic is the intersection of the sphere $\mathbb{S}^2$ with a quadratic cone. It doesn't make any harm to assume that all our configuration of spherical conics and polygons are contained in the northern hemisphere: $$\mathbb{S}^2_+:=\{(x,y,z): x^2+y^2+z^2=1, z>0 \}.$$
If $\pi: \mathbb{S}^2_+ \rightarrow \{(x, y, 1) : x,y \in \mathbb{R}\}\cong \mathbb{R}^2$  is the central projection, notice $\pi$ and its inverse $\pi^{-1}$ preserve conics and geodesics. Hence the Poncelet porism still holds in the spherical case.\\

Suppose $\gamma, \Gamma \subset \mathbb{S}^2_+$ are two conics that admit a $1$-parameter family of Poncelet triangles. For the specific case of triangles, it happens that the average of the vertices (projected into the sphere) coincides with the intersection of the medians. That is, if $T=(V_1, V_2,V_3)$ with $V_i \in \mathbb{S}^2$ is a spherical triangle, and $CM_2(T)\in \mathbb{S}^2$ is the intersection of the medians of $T$, then:

$$CM_2(T)=\frac{V_1+V_2+V_3}{||V_1+V_2+V_3||}=CM_0(T).$$ 

For this specific case of triangles, denote $CM(T):=CM_2(T)=CM_0(T)$.\\

\begin{figure}
\centering
\includegraphics[scale=0.25]{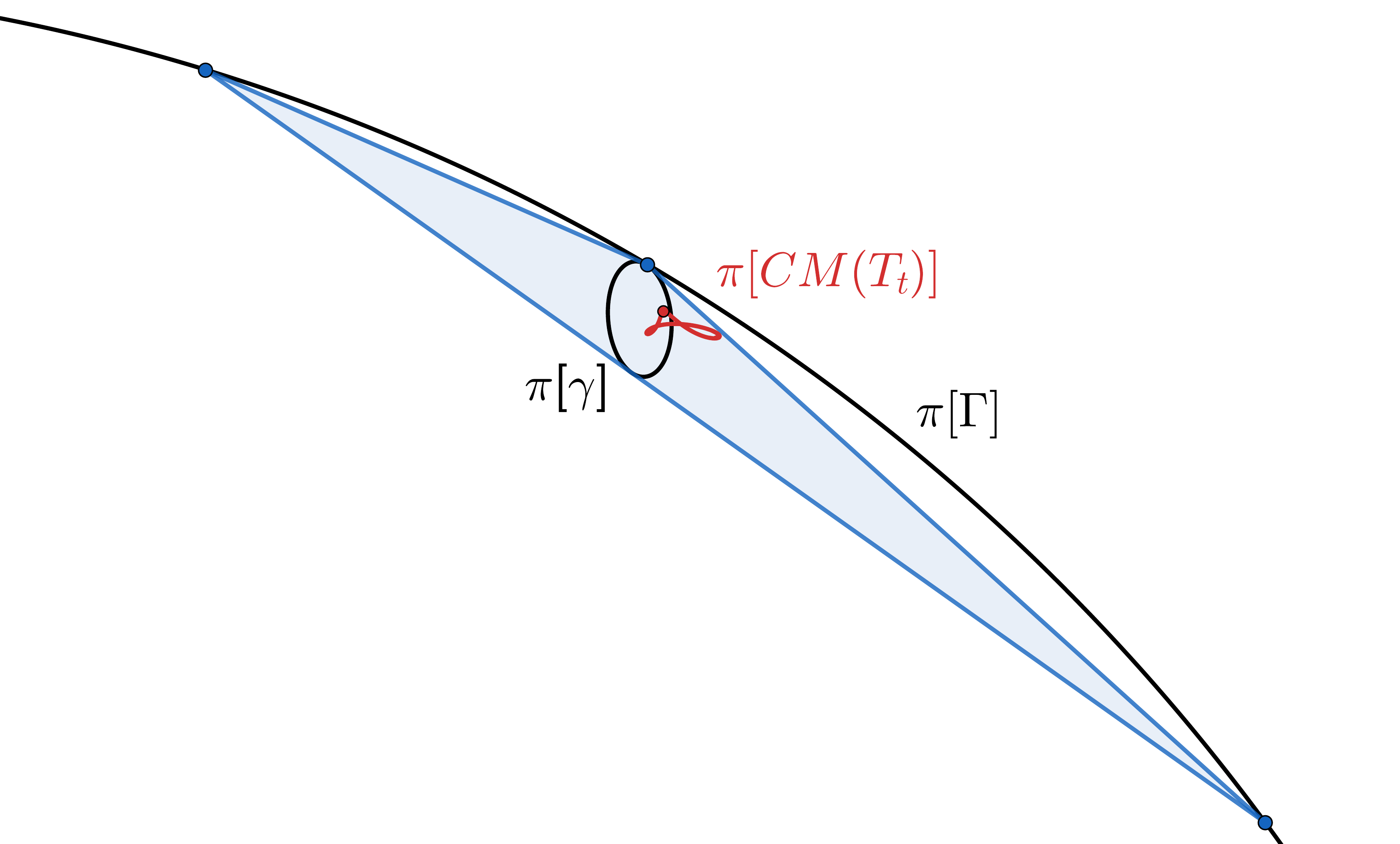}
\caption{The locus of the center of mass of Poncelet triangles is not a conic.}
\label{spherical}
\end{figure}

Figure \ref{spherical} shows the projection of a configuration of conics $\gamma, \Gamma \subset \mathbb{S}^2_+$ that admit a $1$-parameter family of Poncelet triangles $T_t \subset \mathbb{S}^2_+$. One can observe $\pi[CM(T_t)]$ is not a conic, so $CM(T_t)\subset \mathbb{S}^2$ is also not a conic.


\section{Acknowledgments} 

This paper wouldn't be possible without the guidance received from S. Tabachnikov, who suggested the topic in the first place. I'm deeply grateful for all the stimulating discussions and all the insightful comments and corrections.\\
Many thanks to A. Akopyan, D. Rudenko and M. Ghomi for the valuable suggestions and to R. Schwartz, whose work and ideas inspired this paper.\\
I also acknowledge the support received from the U.S. National Science Foundation through the grant DMS-1510055.

\bibliographystyle{abbrv}
\bibliography{CCM-Poncelet}

\begin{thebibliography}{10}

\bibitem{Akopyan-billiards}
A.~Akopyan, R.~Schwartz, and S.~Tabachnikov.
\newblock Billiards in ellipses revisited.
\newblock {\em arXiv preprint arXiv:2001.02934}, 2020.

\bibitem{Berger}
M.~Berger.
\newblock {\em Geometry revealed: a Jacob's ladder to modern higher geometry}.
\newblock Springer Science \& Business Media, 2010.

\bibitem{Bialy-dan}
M.~Bialy and S.~Tabachnikov.
\newblock Dan {R}eznik's identities and more.
\newblock {\em arXiv preprint arXiv:2001.08469}, 2020.

\bibitem{Fierobe-circumcenter}
C.~Fierobe.
\newblock On the circumcenters of triangular orbits in elliptic billards.
\newblock {\em arXiv:1807.11903}, 2018.

\bibitem{Flatto}
L.~Flatto.
\newblock {\em Poncelet's theorem}.
\newblock American Mathematical Soc., 2009.

\bibitem{Reznik-ellipses}
R.~Garcia, D.~Reznik, and J.~Koiller.
\newblock Loci of 3-periodics in an {E}lliptic {B}illiard: why so many
  ellipses?
\newblock {\em arXiv:2001.08041}, 2020.

\bibitem{Reznik-new-properties}
R.~Garcia, D.~Reznik, and J.~Koiller.
\newblock New {P}roperties of triangular orbits in {E}lliptic {B}illiards.
\newblock {\em arXiv:2001.08054}, 2020.

\bibitem{G-H}
P.~Griffiths and J.~Harris.
\newblock On {C}ayley’s explicit solution to {P}oncelet’s porism.
\newblock {\em Enseign. Math}, 24(1-2):31--40, 1978.

\bibitem{Izmestiev-conics}
I.~Izmestiev.
\newblock Spherical and hyperbolic conics.
\newblock {\em Eighteen Essays in Non-Euclidean Geometry}, pages 263--320,
  2019.

\bibitem{Reznik-ballet}
D.~Reznik, R.~Garcia, and J.~Koiller.
\newblock The {B}allet of triangle centers on the {E}lliptic {B}illiard.
\newblock {\em arXiv:2002.00001}, 2020.

\bibitem{Reznik-intelligencer}
D.~Reznik, R.~Garcia, and J.~Koiller.
\newblock Can the elliptic billiard still surprise us?
\newblock {\em The Mathematical Intelligencer}, 42(1):6--17, 2020.

\bibitem{Romaskevich-incenter}
O.~Romaskevich.
\newblock On the incenters of triangular orbits on elliptic billiards.
\newblock {\em L’Enseignement Math{\'e}matique}, 60(3):247--255, 2015.

\bibitem{centers-poncelet}
R.~Schwartz and S.~Tabachnikov.
\newblock Centers of mass of {P}oncelet polygons, 200 years after.
\newblock {\em The Mathematical Intelligencer}, 38:29--34, 2016.

\bibitem{CCM}
S.~Tabachnikov and E.~Tsukerman.
\newblock Circumcenter of {M}ass and generalized {E}uler line.
\newblock {\em Discrete \& Computational Geometry}, 51(4):815--836, 2014.

\end{thebibliography}

\end{document}